\definecolor{Myblue}{rgb}{0.0,0,0.9}
\definecolor{Mygreen}{rgb}{0.2,1,0}
\newcommand{\IP}[1]{{\color{black}#1}}
\newcommand{\MIP}[1]{{\color{black}#1}}
\newtheorem{pro}{Proposition}[section]
\newtheorem{teo}[pro]{Theorem}
\newtheorem{defi}[pro]{Definition}
\newtheorem{lem}[pro]{Lemma}
\newtheorem{cor}[pro]{Corollary}
\newtheorem{rk}[pro]{Remark}
\newtheorem{ex}[pro]{Example}
\newcommand{\I}{\mathbb{I}}
\newcommand{\N}{\mathbb{N}}
\newcommand{\T}{\mathbb{T}}
\newcommand{\Puno}{\mathbb{P}_{1}}
\newcommand{\Pinf}{\mathbb{P}_{\infty}}
\newcommand{\Iuno}{\mathbb{I}_{1}}
\newcommand{\Iinf}{\mathbb{I}_{\infty}}
\newcommand{\tla}{\tau_{\LA}}
\newcommand{\ta}{\tau_{\id}}
\newcommand{\modu}{\mathrm{mod}}
\newcommand{\Ext}{\mathrm{Ext}}
\newcommand{\Hom}{\mathrm{Hom}}
\newcommand{\End}{\mathrm{End}}
\newcommand{\Ker}{\mathrm{Ker}}
\newcommand{\Coker}{\mathrm{Coker}}
\newcommand{\pd}{\mathrm{pd}}
\newcommand{\phinjd}{\phi_r\mathrm{dim }}
\newcommand{\injd}{\mathrm{inj dim }}
\newcommand{\phd}{\phi_l\mathrm{dim }}
\newcommand{\id}{\mathfrak{A}}
\newcommand{\gld}{\mathrm{gld}}
\newcommand{\LA}{\Lambda/\mathfrak{A}}
\newcommand{\add}{\mathrm{add}}
\begin{document}
\title[Idempotent ideals and Igusa-Todorov functions]{\IP {Idempotent ideals and \\ the Igusa-Todorov functions}}
\author{A. Gatica, M. Lanzilotta, M. I. Platzeck}
\keywords{ Idempotent ideals, Igusa-Todorov functions, homological dimensions, artin algebras}
\thanks{2010 {\it{Mathematics Subject Classification}}.  16E10, 16G10.\\
The authors thank the financial support received from Universidad de la Rep\'ublica, Montevideo, Uruguay, from Universidad Nacional del Sur, Bah\'{\i}a Blanca, Argentina and from CONICET, Argentina}
\begin{abstract} Let $\Lambda$ be an artin algebra and $\id$  a two-sided idempotent ideal of $\Lambda$, that is,  $\id$ is the trace of a projective $\Lambda$-module $P$ in $\Lambda$. We consider the categories of finitely generated modules over the associated rings $\LA, \Lambda$ and $\Gamma=\End_{\Lambda}(P)^{op}$ and study the relationship  between their homological properties via the Igusa-Todorov functions.
\end{abstract}

\maketitle

\section{Introduction} 
Throughout this paper we assume that $\Lambda$ is an artin algebra and all $\Lambda$-modules are in $\modu\Lambda$, the category of finitely generated left $\Lambda$-modules.

In \cite{IT} Igusa and Todorov introduced two functions $\phi$ and $\psi$ which turned out to be  powerful tools to study the finitistic dimension of some classes of algebras. 
On the other hand, associated to an idempotent ideal $\id$ of $\Lambda$,  there is an exact sequence of categories  $\modu\LA \rightarrow \modu\Lambda \xrightarrow {e_P}  \modu\Gamma$, where P is a projective module such that $\id = \tau_P\Lambda$ is the trace of $P$ in $\Lambda$, \IP{ $\Gamma = \End_\Lambda(P)^{op} $ and $e_P = \Hom_\Lambda (P,-)$ is the evaluation functor}. In \cite{APT} the authors studied the relation between the homological properties of the three categories involved: $\modu\LA , \modu\Lambda$ and  $\modu\Gamma.$ Our objective in this paper is to study the behaviour of the Igusa-Todorov functions  in this situation. For a finitely generated  $\Lambda$-module $X$, we will denote $\phi(X)$ by $\phi_l^\Lambda(X)$, and the supremum of these  numbers  for $X$ in $\modu \Lambda$ \IP{is the $\phi_l$ dimension of $\Lambda $}, denoted by  $\phd (\Lambda).$ \IP{Additionally, add$X$ denotes the full subcategory of $\modu\Lambda$ consisting of summands of finite direct sums of $X$.}

First we consider the inclusion of $\modu\LA$ in $\modu\Lambda$. To compare the values of  the Igusa-Todorov functions  in  a $\LA$-module $X$ in both categories we need the further assumption that the idempotent ideal $\id$ is a strong idempotent ideal, in the sense defined in \cite{APT}. We recall that the ideal   $\id$ is a {\it strong idempotent ideal} if the morphism $\Ext^i_{\LA}(X,Y) \rightarrow  \Ext^i_\Lambda(X,Y)$ induced by the canonical isomorphism $\Hom_{\LA} (X,Y) \rightarrow \Hom_\Lambda (X,Y)$ is an isomorphism for all $i\geq 0$ and all $X,Y$ in $\modu \LA$. We prove that  $\phi^{\LA}_l(X)\leq \phi^{\Lambda}_l(X) \, \,$  for all $ \, \, X \in \modu \LA$, whenever $\id$ is a strong idempotent ideal of finite projective dimension. Thus in this case the $\phi_l$ dimension of $\LA $ is bounded by the $\phi_l$ dimension of $\Lambda$.






In order to compare the behaviour of the  Igusa-Todorov functions under the functor $\modu\Lambda \xrightarrow {e_P}  \modu\Gamma$, 
we recall that  $e_P$ induces an equivalence between the full subcategory  of $\modu\Lambda$ consisting of the $\Lambda$-modules $X$ having a presentation in add$P$, and $\modu\Gamma$. We prove that both functions  $\phi$  and  $\psi$ are preserved   under $e_P$ for modules having a resolution in add$P$. As a consequence we obtain that when all $\Lambda$-modules with a presentation in add$P$ have also a resolution in add$P$, then $\phd\Gamma\leq\phd\Lambda$ and ${\psi_l} {\rm dim}\Gamma \leq {\psi_l} {\rm dim} \Lambda$ (Theorem \ref{puno=pinfinito}).

Then we obtain information about the $\phi$ dimension of $\Lambda$ from the $\phi$ dimensions of the algebras $\LA$ and $\Gamma$. We prove several inequalities, which are interesting when either the global dimension of $\LA$ or the global dimension of $\Gamma$ are finite.


To prove these results we use, in one hand, the characterization of the Igusa-Todorov function $\phi$ in terms of the bifunctor $\Ext (-,-)$ given in \cite{FLM}. On the other hand, the full subcategory   $\T$ of mod$\Lambda$  introduced in \cite{APT} consisting of the modules $T$ such that $\Ext^{i}_{\Lambda}(\LA ,T)=0$ for  all $i\geq 1$,  is very useful for our purposes. Consider the full subcategories   $\mathbb P_0$   and $\mathbb P_\infty$ of  $\modu\Lambda$, where $\mathbb P_0$ consists  of the modules whose projective cover is in add$P$, and $\mathbb P_\infty$  of those having a  projective resolution in add$P$. We  use the fact, proven in section 3, that  ($\mathbb P_0 ,\modu\LA$) is a torsion pair in $\modu\Lambda$ whose properties  are inherited  by the pair ($\mathbb P_\infty, \modu\LA$) in the category $\tilde\T$  dual of $\T$.








\section{Preliminaries}

\IP{Let $ \Lambda$ be an artin algebra, $M$ and $N$ in  $\modu\Lambda$. We denote by $\tau_MN$ the trace of $M$ in $N$, that is, the submodule of $N$ generated by the homomorphic images of maps from $M$ to $N$. Moreover, $P_0(M)$,  $I_0(M)$ denote the projective cover and injective envelope of $M$, and $\Omega^n(M)$, $\Omega^{-n}(M)$ the $n^{th}$ syzygy and the $n^{th}$ cosyzygy of $M$, respectively. Finally, $\pd M$ denotes the projective dimension of M and $\gld \Lambda$ stands for the global dimension of $\Lambda$.}

\IP{We start by recalling some definitions and results from \cite{APT} which will be used throughout the paper.} Let $\id$ be an idempotent ideal of $\Lambda$, $P_0$  the projective cover of $\id$, and  $P=\Lambda e$  where $e$ is an idempotent element of $\Lambda $ such that $\add P = \add P_0$. Then  $\id  = \Lambda e \Lambda = \tau_P \Lambda$ is the trace of $P$ in $\Lambda$,  mod$\,\LA$ is a Serre subcategory of mod$\Lambda$ and this  inclusion induces an exact sequence of categories  $\modu\LA \rightarrow \modu\Lambda \xrightarrow {e_P}  \modu\Gamma$, where $\Gamma = \End_\Lambda(P)^{op} $ and $e_P = \Hom_\Lambda (P,-)$ is the evaluation functor. 

Let $P/rP \simeq S_1 \oplus\cdots \oplus S_r$, with $S_i$ simple for all $i$, so that $P= P_0(S_1 \oplus\cdots \oplus S_r)$, and let $I = I_0(S_1 \oplus\cdots \oplus S_r)$. To compare the homological properties of mod$\Lambda$ and mod$\Gamma$,  full subcategories $\mathbb P_k$ and $\mathbb I_k$ were introduced in \cite{APT} for any $k\geq 0$. \IP{These subcategories  will be useful for our purposes, and are defined} as follows: $\mathbb P_k$ is the full subcategory of $\modu \Lambda$ consisting of the $\Lambda$-modules $X$ having a projective resolution $ \cdots \rightarrow P_1 \rightarrow P_0 \rightarrow X \rightarrow 0 $ with $P_i$ in $\add P$ for $ 0 \leq i \leq k$. The full subcategory $\mathbb I_k$ is defined dually.

Then  $\Hom_\Lambda(P,-)$ induces equivalences $\Puno \rightarrow \modu \Gamma$ and $\Iuno \rightarrow \modu \Gamma$. Moreover, the morphism of connected sequences of functors $\Ext^i_\Lambda(X,Y) \rightarrow  \Ext^i_\Gamma((P,X),(P,Y))$
induced by   $\Hom_\Lambda(P,-)$  is an isomorphism for $i=1,\cdots , k$, whenever $X\in \mathbb P_{(k+1)}$ or 
$Y\in \mathbb I_{(k+1)}$ (Theorem 3.2, \cite{APT}).

\IP{ We next turn our attention to the definition of the Igusa-Todorov functions, defined in \cite{IT}. Let $K_0$ denote the  abelian group generated by all symbols $[M]$, where $M$  in $\modu\Lambda$, modulo the relations a) $[C] = [A]+ [B]$ if $C\simeq A \oplus B$ and b)  $[P] = 0 $ if $P$ is projective. That is, $K_0$ is the free abelian group generated by the isomorphism classes of indecomposable finitely generated nonprojective $\Lambda$-modules. Let $\Omega :K_0 \rightarrow K_0$ denote the group homomorphism induced by the syzygy, that is, $\Omega([M]) := [\Omega(M)]$, and let $<{\rm add}M>$ be the subgroup of $K_0$ generated by the indecomposable sumands of $M$. When we apply the homomorphism $\Omega$ to this subgroup the rank does not increase: rank  $\Omega(<{\rm add}M>)\leq $ rank $<{\rm add}M>$, and  there is then an integer $n$ such that $\Omega:  \Omega^s(<{\rm add}M>) \rightarrow \Omega^{s+1}(<{\rm add}M>)$  is an isomorphism for all $s\geq n$. Then the Igusa-Todorov functions $\phi$ and $\psi$ are defined as follows:  $\phi (M) $ is the  smallest non-negative  integer  $n$ with this property, and $\psi(M):= \phi(M) + {\rm sup} \{ \pd X |\,   X \mbox { is a direct summand   of }  \Omega^{\phi(M)}(M)\  \mbox { with }  \pd X < \infty \}$.
 Since we will need also the dual notions, we will denote the Igusa-Todorov functions $\phi$ and $\psi$ by  $\phi_l$ and $\psi_l $, respectively. Using the cozyzygy  we can define    $\phi_r(M)$ and $\psi_r(M)$ in an analogous way.  Then $\phi_r(M)= \phi_l(DM)$ and $\psi_r(M)= \psi_l(DM)$, for any $M$ in $\modu\Lambda$.

Let $\phd \Lambda= \,  $sup$ \{ \phi_l(M) | \, M $ in $ \modu\Lambda\}$ and $\psi_l{\rm dim}\Lambda= \, $sup$ \{ \psi_l(M) | \, M $ in $ \modu\Lambda\}$. Moreover, for a subcategory $\mathcal X$ of $\modu\Lambda$ we indicate by $\phd \mathcal X$ and  $\psi_l{\rm dim}\mathcal X$ the supremum of the sets $ \{ \phi_l(X) | \, X $ in $ \mathcal X\}$ and $ \{ \psi_l(X) | \, X $ in $ \mathcal X\}$, respectively. Analogous notions are defined for $\phi_r$ and $\psi_r$.

We will also need the characterization of the function $\phi$ in terms of the bifunctor $\Ext^i_\Lambda (-,-)$ given in \cite{FLM}. We recall first that a pair $(X,Y)$ of objects in add$M$ is called  {\it d-division
of $M$} if the following three conditions hold:
\begin{itemize}

\item[(a)] $\add (X) \cap \add (Y) = 0$

\item[(b)]  $\Ext^d_\Lambda (X,-) \not\simeq \Ext^d_\Lambda (Y,-)$ in $\modu\Lambda$

\item[(c)] $\Ext^{d+1}_\Lambda (X,-) \simeq \Ext^{d+1}_\Lambda (Y,-)$ in $\modu\Lambda.$
\end{itemize}

 Dually, a pair $(X,Y)$ of objects in add$M$ is called  {\it d-injective division
of $M$} if (a) and the following two  conditions hold:
\begin{itemize}

\item[(b')] $\Ext^d_\Lambda (-,X) \not\simeq \Ext^d_\Lambda (-,Y)$ in $\modu\Lambda$

\item[(c')] $\Ext^{d+1}_\Lambda (-,X) \simeq \Ext^{d+1}_\Lambda (-,Y)$ in $\modu\Lambda.$
\end{itemize}

Then $\phi_l(M) = $max $(\{d \in  {\mathbb N} : $there is a d-division of $ M\} \cup \{0\})$ (\cite{FLM}, Theorem 3.6), and
$\phi_r(M) = $max $(\{d \in  {\mathbb N} : $there is a d-injective division of $ M\} \cup \{0\}).$

}

\section{Torsion theories associated to an idempotent ideal}
It is interesting to notice that the idempotent ideal $\id$ determines two torsion pairs  $(\modu\LA, \mathbb I_0)$ and $(\mathbb P_0, \modu\LA)$  in mod$\Lambda$, in the sense defined by Dickson in \cite{D}, as we state in the following proposition. 

\begin{pro} Let $\id$ be an idempotent ideal of $\Lambda$, $\id = \tau_P \Lambda$, where $P$ is 
a projective $\Lambda$-module. Then
\begin{itemize}
\item[(a)] $(\modu\LA, \mathbb I_0)$ is a torsion pair in  {\rm mod}$\Lambda$ .

\item[(b)] 
$(\mathbb P_0, \modu\LA)$ is  a torsion pair in  {\rm mod}$\Lambda$.
\end{itemize}
\begin{proof}
(a) 
To prove this we observe that $\mathbb I_0$ consists of the modules with socle in $\add (S_1 \oplus \cdots \oplus S_r)$, and that a $\Lambda$-module $M$ is in mod$\LA$ if and only if $S_1, \cdots , S_r$ are not composition factors of $M$. Then $\Hom_\Lambda (M,Y) = 0$ for $M\in \modu \LA$ and $Y\in \mathbb I_0$. Moreover, if $\Hom_\Lambda (M,Y) = 0$ for all $Y\in \mathbb I_0$, then in particular $\Hom_\Lambda (M,I_0(S_1 \oplus\cdots \oplus S_r)) = 0$, so that $S_1, \cdots , S_r$ are not composition factors of M, and  $M$ is thus a $\LA$-module. 
Finally, suppose that $\Hom_\Lambda (M,Y) = 0$ for each  $M\in \modu \LA$. Then $\Hom_\Lambda (S,Y) = 0$ for any simple $S$ not isomorphic to $S_1, \cdots , S_r$. Thus the only simples in the socle of $Y$ are amongst $S_1, \cdots, 
 S_r$, and therefore $Y \in \mathbb I_0$. This shows that $(\modu\LA, \mathbb I_0)$ is a torsion pair in mod$\Lambda$. 
 
 The statement (b) follows by duality. 
\end{proof}

\end{pro}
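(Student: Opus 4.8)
The plan is to check directly the three conditions that characterise a torsion pair $(\mathcal T,\mathcal F)$ in Dickson's sense: $\Hom_\Lambda(T,F)=0$ for all $T\in\mathcal T$, $F\in\mathcal F$; every $M$ with $\Hom_\Lambda(M,F)=0$ for all $F\in\mathcal F$ lies in $\mathcal T$; and every $N$ with $\Hom_\Lambda(T,N)=0$ for all $T\in\mathcal T$ lies in $\mathcal F$. For part~(a) I would first record two elementary translations. A $\Lambda$-module $M$ lies in $\modu\LA$ iff $\id M=0$, iff $eM=\Hom_\Lambda(P,M)=0$; and since $\Hom_\Lambda(P,-)$ is exact and, $P$ being the projective cover of $S_1\oplus\cdots\oplus S_r$, is nonzero exactly on the simple modules $S_1,\dots,S_r$, this holds iff none of $S_1,\dots,S_r$ is a composition factor of $M$. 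Dually, by the definition of $\mathbb I_0$ a module $Y$ lies in $\mathbb I_0$ iff $I_0(Y)\in\add I$ with $I=I_0(S_1\oplus\cdots\oplus S_r)$, i.e.\ iff $\operatorname{soc} Y\in\add(S_1\oplus\cdots\oplus S_r)$.

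Granting these, the three conditions follow quickly. If $M\in\modu\LA$ and $Y\in\mathbb I_0$, a nonzero map $M\to Y$ has nonzero image $U\subseteq Y$; then $\operatorname{soc} U\subseteq\operatorname{soc} Y$ puts some $S_i$ in $U$, making $S_i$ a composition factor of $M$ since $U$ is a quotient of $M$ --- a contradiction, so $\Hom_\Lambda(M,Y)=0$. If $\Hom_\Lambda(M,Y)=0$ for all $Y\in\mathbb I_0$, then in particular $\Hom_\Lambda(M,I_0(S_i))=0$ for each $i$; as $\Hom_\Lambda(-,I_0(S_i))$ is exact and nonzero exactly on the modules having $S_i$ as a composition factor, $M\in\modu\LA$. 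If $\Hom_\Lambda(M,Y)=0$ for all $M\in\modu\LA$, apply this to each simple $S\not\cong S_1,\dots,S_r$ (which lies in $\modu\LA$): no such $S$ embeds in $Y$, hence $\operatorname{soc} Y\in\add(S_1\oplus\cdots\oplus S_r)$ and $Y\in\mathbb I_0$. This proves~(a).

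For part~(b) I would invoke the usual duality $D$, which carries the torsion pair of~(a) for $\Lambda$ to a torsion pair $(D\mathbb I_0,D(\modu\LA))$ for $\Lambda^{op}$. Here $\id$ is again an idempotent ideal of $\Lambda^{op}$ --- the trace in $\Lambda^{op}$ of the projective right module $e\Lambda$ --- so that $D(\modu\LA)=\modu(\Lambda^{op}/\id)$, while $D$ turns the socle condition defining $\mathbb I_0$ into the top condition defining $\mathbb P_0$ over $\Lambda^{op}$, with $DP$ playing the role of $P$. Since $\Lambda^{op}$ is an arbitrary artin algebra this gives~(b); alternatively one can repeat the argument of~(a) with ``socle'' replaced by ``top'' and ``composition factor of $M$'' by ``composition factor of a quotient of $X$''. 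The only places calling for a little care are these two module-theoretic translations and the bookkeeping in the duality step, but neither is a genuine obstacle --- both rest only on the fact that the functors $\Hom_\Lambda(P_0(S),-)$ and $\Hom_\Lambda(-,I_0(S))$ are exact and single out $S$ among the simple modules.
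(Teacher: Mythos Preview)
Your proof is correct and follows essentially the same approach as the paper's: you use the same characterisations of $\modu\LA$ (via absence of $S_1,\dots,S_r$ as composition factors) and of $\mathbb I_0$ (via the socle condition), test against the same modules $I_0(S_i)$ and the simples $S\not\simeq S_i$, and deduce (b) by duality. Your write-up is a bit more explicit in a few places (the contradiction argument for orthogonality, the bookkeeping in the duality step), but there is no genuine difference in strategy.
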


\vskip .1in

In the sequel we will  consider the full subcategory $\T$ of mod$\Lambda$  introduced and studied in section 5 of \cite{APT}, consisting of the modules $T$ such that  the group $\Ext^{i}_{\Lambda}(\LA ,T)=0$ for  all $i\geq 1$. Dually, we define  the subcategory $\tilde \T= D(\T_\Lambda)$ consisting of the $\Lambda$-modules $X$ such that  $\Ext^{i}_{\Lambda}(X, D(\LA_\Lambda))=0$ for all $i\geq 1$.


The notion of torsion pairs in abelian categories defined by Dickson  was extended to pretriangulated categories by Beligianis and Reiten (see \cite{BR}, Ch. II, Definition 3.1). Additive categories with kernels and cokernels are  examples of pretriangulated  categories, as shown in section 1, Example 2 of the same paper, and in this case torsion pairs are defined as follows.

\begin{defi} {\rm(\cite{BR})} A pair of subcategories $(\mathcal{X}, \mathcal{Y})$ in an additive category $\mathcal{C}$ with kernels and cokernels and closed under isomorphisms is a torsion pair if the following conditions hold:

T1) $\Hom_{\mathcal{C}}(X,Y)=0$ for all $X \in \mathcal{X}$, $Y \in \mathcal{Y}$

T2) For every $ C \in \mathcal{C}$ there is an exact sequence
$ 0 \rightarrow X_C \rightarrow C \rightarrow Y_C \rightarrow 0$ with $X_C \in \mathcal{X}$, $Y_C \in \mathcal{Y}$ (glueing sequence).

\end{defi}

We observe that for the torsion pairs 
 $(\modu\LA, \mathbb I_0)$ and $(\mathbb P_0, \modu\LA)$ above considered the glueing sequences for a module $X $ in mod$\Lambda$ are
 $0 \rightarrow \tau_{\LA} X \rightarrow X \rightarrow  X/\tau_{\LA} X \rightarrow 0$  and $0 \rightarrow \tau_\id X \rightarrow X \rightarrow  X/\tau_\id X \rightarrow 0$ respectively.

We now   turn our attention to the subcategories $\T$ and $\tilde \T$ of  $\modu \Lambda$, and study the restriction  of these torsion pairs to   $ \T$ and $\tilde \T$ respectively, under the assumption that the ideal $\id$ is strong idempotent.


\begin{pro}
Let $\id$ be a strong idempotent ideal of $\Lambda$. Then
\begin{itemize}

\item[(a)] $\mathbb I_{\infty}=\mathbb I_{0} \cap \T$, and the pair $(\modu\LA,I_\infty )$ of subcategories of $\T$ satisfies conditions T1) and T2) of Definition 3.2 of torsion pair. Moreover,   for  $T$ in $\T$ the glueing sequence is  $\, \,  0 \rightarrow \tau_{\LA} T \rightarrow T \rightarrow  T/\tau_{\LA} T \rightarrow 0$.

\item[(b)] $\mathbb P_{\infty}=\mathbb P_{0} \cap \tilde\T$, and the pair $(\modu\LA,P_\infty )$ of subcategories of $\tilde\T$ satisfies conditions T1) and T2) of Definition 3.2 of torsion pair. Moreover,   for $T \in \tilde{\T}$ the glueing sequence is  $\, \, 0 \rightarrow \tau_\id T \rightarrow T \rightarrow  T/\tau_\id T \rightarrow 0$.
\end{itemize}
\end{pro}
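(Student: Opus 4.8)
The plan is to prove part (a) in detail and obtain (b) by the usual duality, since $\tilde\T = D(\T_{\Lambda^{op}})$, $\mathbb P_k = D(\mathbb I_k$ for $\Lambda^{op})$, and $\tau_\id T$ is dual to $\tau_{\LA}$. So I focus on (a).

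First I would establish the identity $\mathbb I_\infty = \mathbb I_0 \cap \T$. The inclusion $\mathbb I_\infty \subseteq \mathbb I_0$ is immediate, and $\mathbb I_\infty \subseteq \T$ should follow from the fact that if $T \in \mathbb I_\infty$ then $T$ has an injective coresolution in $\add I$, and dimension-shifting together with the vanishing $\Ext^i_\Lambda(\LA, I) = 0$ for $i \geq 1$ (valid because $I = I_0(S_1\oplus\cdots\oplus S_r)$ and no $S_j$ is a composition factor of a $\LA$-module, so $\LA$-modules have no extensions into $I$) gives $\Ext^i_\Lambda(\LA, T) = 0$ for all $i \geq 1$. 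For the reverse inclusion $\mathbb I_0 \cap \T \subseteq \mathbb I_\infty$, I would argue inductively: given $T \in \mathbb I_0 \cap \T$, its injective envelope $I_0(T)$ lies in $\add I$ (that is what $T \in \mathbb I_0$ means), and I must show the first cosyzygy $\Omega^{-1}(T)$ again lies in $\mathbb I_0 \cap \T$. That $\Omega^{-1}(T) \in \T$ follows by dimension-shifting from $\Ext^i_\Lambda(\LA, T) = 0$ and $\Ext^i_\Lambda(\LA, I_0(T)) = 0$. That $\Omega^{-1}(T) \in \mathbb I_0$ is the point where I expect to need the strong idempotent hypothesis, most likely through Theorem 3.2 of \cite{APT} (the $\Ext$-comparison isomorphism under $\Hom_\Lambda(P,-)$) or through a characterization of $\mathbb I_0$ in terms of vanishing of certain $\Ext$ or $\Hom$ groups that is stable under cosyzygy precisely when $\id$ is strong.

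Next, with the identity in hand, I would verify T1) and T2) for the pair $(\modu\LA, \mathbb I_\infty)$ inside $\T$. Condition T1) is free: $\Hom_\Lambda(M, Y) = 0$ for $M \in \modu\LA$ and $Y \in \mathbb I_\infty \subseteq \mathbb I_0$ was already shown in Proposition 3.1(a). For T2), given $T \in \T$, I take the glueing sequence $0 \to \tau_{\LA}T \to T \to T/\tau_{\LA}T \to 0$ coming from the ambient torsion pair $(\modu\LA, \mathbb I_0)$ in $\modu\Lambda$. One checks $\tau_{\LA}T \in \modu\LA$ automatically, so the work is to show $T/\tau_{\LA}T \in \mathbb I_\infty$, equivalently, by the identity just proved, that $T/\tau_{\LA}T \in \T$ (we already know $T/\tau_{\LA}T \in \mathbb I_0$). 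Applying $\Ext^\bullet_\Lambda(\LA, -)$ to the short exact sequence and using that $\tau_{\LA}T \in \modu\LA$ together with the \emph{strong} idempotent condition — which forces $\Ext^i_\Lambda(\LA, \tau_{\LA}T) = \Ext^i_{\LA}(\LA, \tau_{\LA}T) = 0$ for $i \geq 1$ since $\LA$ is projective over itself — gives $\Ext^i_\Lambda(\LA, T) \cong \Ext^i_\Lambda(\LA, T/\tau_{\LA}T)$ for all $i \geq 1$; since the left side vanishes by $T \in \T$, so does the right, proving $T/\tau_{\LA}T \in \T$. I should also note that $\T$ is closed under the relevant kernels and cokernels so that the ambient glueing sequence genuinely lives in $\T$.

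The main obstacle I anticipate is the reverse inclusion $\mathbb I_0 \cap \T \subseteq \mathbb I_\infty$, i.e.\ showing that the cosyzygy of a module in $\mathbb I_0 \cap \T$ stays in $\mathbb I_0$; this is where the strong idempotent hypothesis must be used in an essential way rather than the weaker idempotent one, and pinning down exactly which characterization of $\mathbb I_0$ propagates along cosyzygies (presumably via the $\Ext$-isomorphism of \cite[Theorem 3.2]{APT} applied with the roles of the comparison functor and the subcategories $\mathbb I_k$) is the delicate part. Everything else — T1), the glueing sequence, and part (b) via duality — should be routine once that inclusion is secured.
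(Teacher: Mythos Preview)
Your overall plan is sound and your T2) argument is essentially identical to the paper's. The point of divergence is the identity $\mathbb I_\infty = \mathbb I_0 \cap \T$, and there you are making it harder than it is while misplacing the role of the strong idempotent hypothesis.

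The paper dispatches the identity in one line by quoting \cite{APT}, Proposition~2.6: a module $Y$ lies in $\mathbb I_\infty$ if and only if $\Ext^i_\Lambda(\LA, Y) = 0$ for all $i \geq 0$. Since $\mathbb I_0 = \{Y : \Hom_\Lambda(\LA, Y) = 0\}$ and $\T = \{Y : \Ext^i_\Lambda(\LA, Y) = 0 \text{ for } i \geq 1\}$ by definition, the equality $\mathbb I_\infty = \mathbb I_0 \cap \T$ is immediate --- and it holds for \emph{any} idempotent ideal, not just strong ones. Your inductive cosyzygy argument would also succeed, and again without the strong hypothesis: from $0 \to T \to I_0(T) \to \Omega^{-1}(T) \to 0$ with $T \in \mathbb I_0 \cap \T$, the long exact sequence sandwiches $\Hom_\Lambda(\LA, \Omega^{-1}(T))$ between $\Hom_\Lambda(\LA, I_0(T)) = 0$ and $\Ext^1_\Lambda(\LA, T) = 0$, so $\Omega^{-1}(T) \in \mathbb I_0$; the $\T$-part is the dimension shift you already noted. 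Thus the step you flagged as the ``main obstacle'' is in fact routine, and does not require \cite{APT}, Theorem~3.2, as you feared.

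The strong idempotent hypothesis enters exactly once, at the place where you actually invoke it in your T2) verification: it guarantees $\modu\LA \subset \T$ via $\Ext^i_\Lambda(\LA, X) \cong \Ext^i_{\LA}(\LA, X) = 0$ for $X \in \modu\LA$, so that $\tau_{\LA}T \in \T$ and hence, by the long exact sequence you wrote, $T/\tau_{\LA}T \in \T \cap \mathbb I_0 = \mathbb I_\infty$. That is precisely the paper's argument.
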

\begin{proof}
Since $\mathbb I_\infty \subseteq \mathbb I_0$ then condition T1) in the definition of torsion pair holds.

Assume now that $\id$ is a strong idempotent ideal. Then $\modu \LA \subset \T$. In fact, if $X \in \modu \LA$ then $\Ext^{i}_{\Lambda}(\LA ,X)\cong \Ext^{i}_{\LA}(\LA ,X)=0$ for all $i \geq 0$, so $X \in \T$. On the other hand, we know that a module $Y$ is in $\mathbb I_{\infty}$ if and only if $\Ext^{i}_{\Lambda}(\LA ,Y)=0$ for all $i \geq 0$, by \cite{APT}, Proposition 2.6. Thus, it follows from the definition of $\T$ that  $\mathbb I_{\infty}=\mathbb I_{0} \cap \T$.

Therefore, for $T \in \T$ the exact sequence $0 \rightarrow \tau_{\LA} T \rightarrow T \rightarrow  T/\tau_{\LA} T \rightarrow 0$ has $\tau_{\LA} T$ in $\modu \LA$ and $T/\tau_{\LA} T$ in $\mathbb I_{\infty}$ and is then a glueing sequence for $T$. This proves condition T2) and ends the proof of (a).  The proof of (b) is similar.

\end{proof}

\begin{rk} 
\IP {Though we do not know} wether the subcategories $\T$ and $\tilde \T$ have kernels and cokernels, we observe that the category $\T$ is not in general abelian as the following simple example shows. Let $\Lambda$ be the path algebra of the quiver $ 1 \rightarrow 2$, and $P = S_2$, the simple projective module associated to the vertex $2$.  Then, $\id =\tau_P \Lambda \simeq  S_2\oplus S_2$ is a projective $\Lambda $ module and therefore it is a strong idempotent ideal.  Moreover,  $\T =\add  \,\{\begin{smallmatrix}\ S_1\ \\S_2\end{smallmatrix},{\begin{smallmatrix}\ S_1\end{smallmatrix}}\}$ and $\LA \simeq S_1$. Consider the exact sequence $0 \rightarrow S_2 \rightarrow    \begin{smallmatrix}\ S_1\ \\S_2\end{smallmatrix} \xrightarrow{f} S_1 \rightarrow 0$. Then   $f$ is a map in $\T$, and $\Ker_\T (f) =0 $, because there are no nonzero maps from objects in $\T$ to $S_2$. Thus \IP {$\Coker_{\T}(\Ker_{\T} (f) ) = \Coker_{\T} (  0 \rightarrow    \begin{smallmatrix}\ S_1\ \\S_2\end{smallmatrix})  = ( \begin{smallmatrix}\ S_1\ \\S_2\end{smallmatrix} \xrightarrow{id}    \begin{smallmatrix}\ S_1\ \\S_2\end{smallmatrix})$. However, ${\Ker}_{\T}({\Coker}_{\T} (f) ) = {\Ker}_{\T} (S_1 \rightarrow    0)= (S_1\xrightarrow{id} S_1)$.}
\end{rk}

\vskip .25in

In connection with the torsion pairs and subcategories above considered we prove two technical lemmas which will be useful throughout the paper.

\begin{lem} Let $\id$ be an idempotent ideal. Then

\begin{itemize}

\item [(a)]  $\Hom_{\Lambda}(P,X) \simeq \Hom_{\Lambda}(P, X/\tla X)$ for  all $X \in \modu \Lambda$.

\item [(b)] $\Ext_{\Lambda}^j(-, X) |_{\Pinf}\simeq \Ext_{\Lambda}^j(-,X/\tla X)|_{\Pinf}$ for all $j \geq 0$ and $X \in \modu\Lambda$.  

\item [(c)] $\Hom_{\Lambda}(X,I) \simeq \Hom_{\Lambda}(\ta X, I)$ for  all $X \in \modu \Lambda$.

\item [(d)] $\Ext_{\Lambda}^j(X,-) |_{\I_\infty}\simeq \Ext_{\Lambda}^j(\tau_\id X,-)|_{\I_\infty}$ for all $j \geq 0$ and $X \in \modu\Lambda$.  

\end{itemize}

\begin{proof}

(a) The result follows directly by applying the exact functor $\Hom_\Lambda (P,-)$ to the exact sequence $0 \rightarrow \tla X \rightarrow X \rightarrow X/\tla X \rightarrow  0$.

(b) We recall from \cite{APT}, Theorem 3.2 c), that there is a functorial isomorphism 
$\Ext_{\Lambda}^j(-, Y) |_{\Pinf}\simeq \Ext_{\Gamma}^j((P,-), (P,Y)) |_{\Pinf}$ for all $Y$ in mod$\Lambda$. The result follows now using (a).

By duality we obtain the statements (c) and (d). 

\end{proof}

\end{lem}

When we further assume that the ideal $\id$  is  strong idempotent we get the following result.

\begin{lem} Let $\id$ be a strong idempotent ideal. Then

\begin{itemize}

\item [(a)] $\Ext_{\Lambda}^j(-, X/\tla X )|_{\modu \Lambda/ \id }=0$ for all  $X \in \T$ and $j \geq 1$.

\item[(b)] $\Ext^j_{\Lambda}(-,X_1) \simeq \Ext^j_{\Lambda}(-,X_2)$ with $j\geq 0$ implies 

\noindent $\Ext^j_{\Lambda} (-, X_1/\tla X_1)|_{\tilde{\T}} \simeq 
\Ext^j_{\Lambda} (-, X_2/\tla X_2)|_{\tilde{\T}}$ for all $X_1, X_2 \in \T$.

\item [(c)] $\Ext_{\Lambda}^j(\tau_\id X, -)=0$ for all  $X \in \tilde \T$ and $j \geq 1$.

\item[(d)] $\Ext^j_{\Lambda}(X_1,-) \simeq \Ext^j_{\Lambda}(X_2,-)$  with $j\geq 0$ implies 

\noindent $\Ext^j_{\Lambda} (\tau_\id X_1,-)|_{{\T}} \simeq 
\Ext^j_{\Lambda} (\tau_\id X_2,-)|_{{\T}}$ for all $X_1, X_2 \in \tilde \T$.

\end{itemize}

\begin{proof}

(a) Let $X$ in $\T$ and $Z$ in mod$\LA$. Then $X/\tla X \in \Iinf$ and using (d) of the previous lemma we conclude that $\Ext_{\Lambda}^j(Z,X/\tla X)\simeq \Ext_{\Lambda}^j(\tau_\id Z,X/\tla X)=0$, since $\tau_\id Z =0$ because $Z$ is a $\LA$-module.

(b) Let $X_1, X_2  \in \T  $ be such that $\Ext^j_{\Lambda}(-,X_1) \simeq \Ext^j_{\Lambda}(-,X_2)$. So  $L_1= X_1/\tla X_1, L_2= X_2/\tla X_2 \in  \Iinf$.  Then we obtain from (a)  that
$\Ext_{\Lambda}^j(-, L_i)|_{\modu \Lambda/ \id }=0$ for   $ i=1,2$ and for all $j \geq 1$.

\noindent Let $Z \in \tilde{\T}$.  Applying the functor $\Hom_{\Lambda}(-, L_i)$ to the glueing sequence
\[\xymatrix{    0 \ar[r] &  \ta Z \ar[r] & Z \ar[r]  & Z/\ta Z \ar[r] & 0  
}\]

\noindent the corresponding long exact sequence yields isomorphisms
$$\Ext^j_\Lambda (Z,L_i) {\simeq}\Ext^j_\Lambda (\ta Z, L_i), $$ \noindent for $i= 1, 2 $ and $j\geq 1$.

On the other hand, by \IP{(b)} of the previous lemma we know that  $\Ext_{\Lambda}^j(\ta Z, X_i) \simeq \Ext_{\Lambda}^j(\ta Z,L_i)$ for $i=1,2, j\geq 1$ \IP{because $\id$ is a strong idempotent ideal, so $Z$ in $\tilde{\T}$ implies that $Z$ is a $\LA$-module. }
Then, in the commutative diagram

\[\xymatrix{   \Ext^j (Z,L_1) \ar[r] \ar[d] & \Ext^j (\ta Z, L_1)   \ar[d] &
 \\
\Ext^j (Z,L_2) \ar[r]  & \Ext^j (\ta Z, L_2)  &
}\]

\noindent the horizontal arrows and the right vertical arrow are isomorphisms when $j\geq 1$. This proves  the left vertical arrow is also an isomorphism in this case, as desired. 

Finally, statements (c) and (d)  follow from (a) and (b) by duality.

\end{proof}
\end{lem}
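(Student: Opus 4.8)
The strategy is to prove parts (a) and (b) directly, exactly in the style already used in Lemma 3.6, and then deduce (c) and (d) by applying the standard duality $D$. For (a), the starting observation is that for $X \in \tilde{\mathcal T}$ we have $\tau_\id X \in \mathbb P_\infty$; indeed, the glueing sequence $0 \to \tau_\id X \to X \to X/\tau_\id X \to 0$ from Proposition 3.4(b) identifies $\tau_\id X$ with the torsion part, and since $\id$ is strong idempotent the decomposition given there puts $\tau_\id X$ in $\mathbb P_\infty$. Once $\tau_\id X \in \mathbb P_\infty$ one wants $\Ext^j_\Lambda(\tau_\id X, -) = 0$ for $j \geq 1$. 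The key point is that $X \in \tilde{\mathcal T}$ forces $X$ to be a $\LA$-module (because $\id$ is strong idempotent, as noted in the proof of Lemma 3.6(b)), hence so is its submodule $\tau_\id X$; but a nonzero $\LA$-module cannot lie in $\mathbb P_\infty$ unless it is already projective, since a module in $\mathbb P_0$ has its projective cover in $\add P$ while a $\LA$-module has no composition factor among $S_1,\dots,S_r$. Therefore $\tau_\id X$ is projective, and $\Ext^j_\Lambda(\tau_\id X,-)=0$ for all $j \geq 1$. This is the dual of the argument appearing in Lemma 3.7(a) as currently written, so it should be presented as the dual.

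For (b), one mimics the proof of Lemma 3.6(b) with all variances reversed. Suppose $X_1, X_2 \in \tilde{\mathcal T}$ satisfy $\Ext^j_\Lambda(X_1,-) \simeq \Ext^j_\Lambda(X_2,-)$ for all $j \geq 0$. Write $L_i = \tau_\id X_i$, which lies in $\mathbb P_\infty$. Applying (c) (i.e.\ the just-proved part (a) of the dual statement) gives $\Ext^j_\Lambda(L_i,-) = 0$ for $j \geq 1$. Now fix $Z \in \mathcal T$; apply $\Hom_\Lambda(-,Z)$ to the glueing sequence $0 \to \tau_\id X_i \to X_i \to X_i/\tau_\id X_i \to 0$ to get, from the long exact sequence and the vanishing just recorded, natural isomorphisms $\Ext^j_\Lambda(X_i, Z) \simeq \Ext^j_\Lambda(X_i/\tau_\id X_i, Z)$ for $j \geq 1$. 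On the other side, Lemma 3.5(d) (the $\I_\infty$ statement) applies: since $\id$ is strong idempotent, $Z \in \mathcal T$ is an $\mathbb I_\infty$-module, so $\Ext^j_\Lambda(X_i, Z) \simeq \Ext^j_\Lambda(\tau_\id X_i, Z) = \Ext^j_\Lambda(L_i, Z)$. Assembling these in the evident commutative square with rows $\Ext^j_\Lambda(X_i,Z) \to \Ext^j_\Lambda(L_i, Z)$ and the hypothesis supplying an isomorphism of the left column, one concludes the right column $\Ext^j_\Lambda(\tau_\id X_1,-)|_{\mathcal T} \simeq \Ext^j_\Lambda(\tau_\id X_2,-)|_{\mathcal T}$ is an isomorphism for $j \geq 1$; the case $j = 0$ is handled separately using $\Hom_\Lambda(\tau_\id X_i, Z) \simeq \Hom_\Lambda(X_i, Z)$ from Lemma 3.5(c) together with the hypothesis at $j=0$.

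Finally, parts (c) and (d) are obtained from (a) and (b) by applying the duality $D \colon \modu\Lambda \to \modu\Lambda^{op}$, which interchanges $\mathcal T$ with $\tilde{\mathcal T}$, the torsion radical $\tau_\id$ with $\tau_{\LA}(-) / $ the corresponding radical, projective with injective modules, and $\Ext^j_\Lambda(-,Y)$ with $\Ext^j_{\Lambda^{op}}(DY,-)$. Since $\id$ being strong idempotent is a self-dual hypothesis (the defining isomorphism condition on $\Ext$ is symmetric under $D$), the dual statements transfer without friction.

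The step I expect to be the main obstacle is the one at the heart of (a): justifying that $\tau_\id X$ is actually projective rather than merely an object of $\mathbb P_\infty$. One must carefully combine three facts — that $\tilde{\mathcal T}$-membership under a strong idempotent ideal forces the module to be a $\LA$-module, that $\mathbb P_0$-membership pins down the projective cover to lie in $\add P$, and that a $\LA$-module shares no composition factor with $P/\rad P$ — to conclude the projective cover is zero, i.e.\ the module is projective. Everything downstream (the vanishing of higher $\Ext$, and then the diagram chase in (b)) is then routine, essentially a mechanical dualization of the already-written proofs of Lemma 3.5 and Lemma 3.6.
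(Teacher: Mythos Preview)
There is a genuine gap at precisely the step you flagged. The claim that $X \in \tilde{\T}$ forces $X$ to be a $\LA$-module is false: every projective $\Lambda$-module lies in $\tilde{\T}$ (trivially, since $\Ext^i_\Lambda(Q,-)=0$ for $i\ge 1$), yet $P = \Lambda e$ is not a $\LA$-module. Under the strong-idempotent hypothesis one has $\modu\LA \subseteq \tilde{\T}$, not the reverse inclusion. The sentence you lift from the paper (``$Z$ in $\tilde{\T}$ implies that $Z$ is a $\LA$-module'') is itself garbled; the correct reason the previous lemma applies at that spot is that $Z \in \tilde{\T}$ gives $\tau_{\id} Z \in \Pinf$ via the glueing sequence for the pair $(\Pinf,\modu\LA)$ in $\tilde\T$. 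Your argument for the second part makes the dual error, asserting that $Z \in \T$ implies $Z \in \Iinf$, whereas one only has $\Iinf = \mathbb{I}_0 \cap \T$. With these false inclusions in place your proof of (c) would make $\tau_{\id}X$ projective and hence render (d) vacuous for $j\ge 1$, which is a sign the argument has gone wrong.

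You have also interchanged the labels: what you call (a) and (b) are the statements (c) and (d). Note too that (c) as printed, with no restriction in the second variable, must be read as $\Ext^j_\Lambda(\tau_{\id} X,-)|_{\modu\LA}=0$, the literal dual of (a); taken unrestricted it would force $\tau_{\id} X$ to be projective for every $X \in \tilde\T$, which already fails for $X = \Lambda$ whenever $\id$ is not projective. The paper's argument for (a) is one line: $X \in \T$ gives $X/\tla X \in \Iinf$, and then for $Z \in \modu\LA$ the previous lemma yields $\Ext^j_\Lambda(Z, X/\tla X) \simeq \Ext^j_\Lambda(\tau_{\id} Z, X/\tla X) = 0$ since $\tau_{\id} Z = 0$. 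Part (b) is the diagram chase you sketch, but fed by the correct input $\tau_{\id} Z \in \Pinf$ for $Z \in \tilde\T$ rather than by either false inclusion; (c) and (d) then follow by duality.
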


\section{Main results}

Next we turn our attention to the functions $\phi$ and $\psi$ defined by Igusa and Todorov. We are going to use the characterization of these functions in terms of the functor Ext given in \cite{FLM}. We start with two lemmas comparing the behaviour of this functor  in mod$\Lambda$ and in mod$\LA$.

\begin{lem}\label{Lema1Jue1}
Let $\id$ be an idempotent ideal such that $\pd(_{\Lambda}\LA)=r<\infty$. Let $X_1, X_2\in\modu \LA$ and $t \geq 1$. Then  $\Ext^t_{\LA}(X_1, -)\simeq \Ext^t_{\LA}(X_2, -)$ implies
$\Ext^{t+r}_{\Lambda}(X_1, -)\simeq \Ext^{t+r}_{\Lambda}(X_2, -)$.
\end{lem}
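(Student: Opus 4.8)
The plan is to reduce everything to a single change-of-rings spectral sequence argument, using the hypothesis $\pd({}_\Lambda\LA)=r$ together with the standard comparison between $\Ext$ over $\LA$ and over $\Lambda$. Recall that for a quotient algebra $\LA$ of $\Lambda$ and modules $M,N$ over $\LA$, there is a Cartan--Eilenberg change-of-rings spectral sequence
\[
E_2^{p,q}=\Ext^p_{\LA}\bigl(M,\Ext^q_{\Lambda}(\LA,N)\bigr)\ \Longrightarrow\ \Ext^{p+q}_{\Lambda}(M,N).
\]
Since $\pd({}_\Lambda\LA)=r<\infty$, the $q$-range is bounded: $E_2^{p,q}=0$ for $q>r$. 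First I would observe that because $\LA$ has projective dimension $r$ as a left $\Lambda$-module, the functor $\Ext^r_\Lambda(\LA,-)$ is right exact, so there is a surjection $N\otimes(\text{something})\to \Ext^r_\Lambda(\LA,N)$; more usefully, $\Ext^r_\Lambda(\LA,-)$ takes a short exact sequence to an exact sequence on the right. This right-exactness is what lets one read off the ``top-corner'' contribution of the spectral sequence.

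The key step is the edge homomorphism at the top of the spectral sequence. For $n=p+r$ with $p$ large (specifically $p\ge 1$, so that the only nonzero term on the antidiagonal $p+q=n$ in the relevant region is $E_2^{p,r}$ — one must check that all $E_2^{p',q'}$ with $p'+q'=n$, $p'+q'+1=n+1$, etc., that could interfere actually vanish, using $q\le r$ and the fact that we are looking at degree exactly $p+r$), the spectral sequence degenerates enough to give a functorial isomorphism
\[
\Ext^{p+r}_{\Lambda}(M,N)\ \cong\ \Ext^{p}_{\LA}\bigl(M,\Ext^{r}_{\Lambda}(\LA,N)\bigr).
\]
Concretely: set $t=p\ge 1$. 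The hypothesis gives a natural isomorphism of functors $\Ext^t_{\LA}(X_1,-)\simeq\Ext^t_{\LA}(X_2,-)$ on $\modu\LA$. Apply both sides to the $\LA$-module $\Ext^r_\Lambda(\LA,N)$ — here I need that $\Ext^r_\Lambda(\LA,N)$ is indeed a module over $\LA$ and depends functorially on $N\in\modu\Lambda$, which is standard since $\LA$ is a $\Lambda$-bimodule. Composing with the edge isomorphism above on both sides yields $\Ext^{t+r}_\Lambda(X_1,N)\simeq\Ext^{t+r}_\Lambda(X_2,N)$, naturally in $N$, which is exactly the claim.

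I expect the main obstacle to be the bookkeeping needed to justify that the edge map $\Ext^{p+r}_\Lambda(M,N)\to E_\infty^{p,r}=E_2^{p,r}$ is an isomorphism for $p\ge1$: one has to verify that the potentially obstructing terms $E_2^{p',q'}$ on the relevant antidiagonals vanish. The vanishing $q'\le r$ handles the terms ``above'' $E_2^{p,r}$, and the differentials into and out of $E_r^{p,r}$ land in $E_r^{p-r,2r-1}$ and $E_r^{p+r,\,r-(r-1)}=E_r^{p+r,1}$-type spots — the first is zero because $2r-1>r$ once $r\ge1$ (the case $r=0$ being trivial, as then $\LA$ is projective and $\Ext^q_\Lambda(\LA,-)=0$ for $q\ge1$, giving $\Ext^*_{\LA}\cong\Ext^*_\Lambda$ directly), and one argues the relevant outgoing differentials vanish for degree reasons on the top row $q=r$. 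An alternative, more elementary route that avoids the spectral sequence entirely: take a projective $\Lambda$-resolution $0\to Q_r\to\cdots\to Q_0\to\LA\to0$, dimension-shift to replace $\LA$-projective resolutions of $X_1,X_2$ by $\Lambda$-complexes, and chase the hyperext; but the spectral sequence packages this cleanly, so I would use it. Once the edge isomorphism is in hand, the deduction from the hypothesis is immediate and purely formal, as sketched above. $\ \Box$
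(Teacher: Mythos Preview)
Your central claim --- that the change-of-rings spectral sequence yields a functorial isomorphism
\[
\Ext^{t+r}_{\Lambda}(M,N)\ \cong\ \Ext^{t}_{\LA}\bigl(M,\Ext^{r}_{\Lambda}(\LA,N)\bigr)
\]
for $t\geq 1$ --- is not correct, and the argument you sketch for it does not go through. Bounding the spectral sequence at $q=r$ only kills the differentials \emph{into} the top row $E_s^{t,r}$; the outgoing differentials $d_s:E_s^{t,r}\to E_s^{t+s,\,r-s+1}$ land in rows $0\le q\le r-1$, which are not forced to vanish. More seriously, the other terms on the antidiagonal, namely $E_2^{t+k,\,r-k}=\Ext^{t+k}_{\LA}(M,\Ext^{r-k}_\Lambda(\LA,N))$ for $1\le k\le r$, are in general nonzero and contribute nontrivial pieces to the filtration of $\Ext^{t+r}_\Lambda(M,N)$. (Already for $r=1$ the two-row spectral sequence gives only a five-term-type exact sequence, not the isomorphism you want.) So the ``edge map is an isomorphism'' step is a genuine gap, and without it the deduction from the hypothesis does not follow.

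The paper's proof uses the same spectral sequence idea but collapses it on the \emph{opposite} edge. One first dimension-shifts: $\Ext^{t+r}_\Lambda(X_j,N)\simeq\Ext^{t}_\Lambda(X_j,\Omega^{-r}N)$. The point of the hypothesis $\pd({}_\Lambda\LA)=r$ is precisely that $\Omega^{-r}N$ lies in $\T$, i.e.\ $\Ext^{q}_\Lambda(\LA,\Omega^{-r}N)=0$ for all $q\ge 1$. Now the spectral sequence for the pair $(X_j,\Omega^{-r}N)$ is concentrated on the row $q=0$ and honestly degenerates, giving $\Ext^{t}_\Lambda(X_j,\Omega^{-r}N)\simeq\Ext^{t}_{\LA}(X_j,\tau_{\LA}(\Omega^{-r}N))$ (this is the content of the cited result from \cite{APT}). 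At this point the hypothesis on $\Ext^t_{\LA}(X_j,-)$ applies directly. In short: your instinct to use the change-of-rings comparison is right, but you must first move the second variable into $\T$ so that the collapse happens on the bottom row, not the top one.
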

\begin{proof}
Note first that $\Ext^{t+r}_{\Lambda}(X_j, -)\simeq \Ext^{t}_{\Lambda}(X_j,\Omega^{-r}(-))$, for $j=1,2$. 
Since $\Omega^{-r}(\modu\Lambda)\subset \T$, from Lemma 5.5, \cite{APT}, using Proposition 1.1, \cite{APT}, it follows that $\Ext^{i}_{\LA}(X_j,\tla(\Omega^{-r}(-)))\simeq  \Ext^{i}_{\Lambda}(X_j,\Omega^{-r}(-))$, for all $1\leq i$, and $j=1,2$.

Now $\Ext^{t+r}_{\Lambda}(X_1,-)\simeq \Ext^{t}_{\Lambda}(X_1,\Omega^{-r}(-))\simeq \Ext^t_{\LA}(X_1,\tla(\Omega^{-r}(-)))\simeq \Ext^t_{\LA}(X_2,\tla(\Omega^{-r}(-)))\simeq \Ext^{t+r}_{\Lambda}(X_2,-)$ \end{proof}

\begin{lem}\label{Lema2Jue3}
Let $\id$ be an idempotent ideal such that $\pd(\LA_{\Lambda})=r<\infty$. Let $X_1, X_2\in \modu \LA$ and $t \geq 1$.
Then $\, \, \Ext^{t}_{\LA}(-,X_1)\simeq \Ext^{t}_{\LA}(-,X_2)\, \, $  implies $\Ext^{t+r}_{\Lambda}(-,X_1)\simeq \Ext^{t+r}_{\Lambda}(-,X_2)$.
\end{lem}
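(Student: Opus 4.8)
The plan is to mirror the proof of Lemma \ref{Lema1Jue1}, replacing the role of the cosyzygy by that of the syzygy and using the left-module version of the projective dimension hypothesis. First I would observe that for $j=1,2$ we have the functorial isomorphism $\Ext^{t+r}_{\Lambda}(-,X_j)\simeq \Ext^{t}_{\Lambda}(\Omega^{r}(-),X_j)$, obtained by dimension shifting in the first variable. This reduces the problem to comparing $\Ext^{t}_{\Lambda}(\Omega^{r}(-),X_1)$ and $\Ext^{t}_{\Lambda}(\Omega^{r}(-),X_2)$.

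Next I would use that $\Omega^{r}(\modu\Lambda)\subset\tilde{\T}$, which is the dual of the inclusion $\Omega^{-r}(\modu\Lambda)\subset\T$ used in the previous lemma; here it is the hypothesis $\pd(\LA_{\Lambda})=r<\infty$ (projective dimension of $\LA$ as a \emph{right} module, equivalently injective dimension of $D(\LA)$ as a left module) that guarantees $r$-th syzygies land in $\tilde{\T}$. Then, applying the dual of Lemma 5.5 of \cite{APT} together with Proposition 1.1 of \cite{APT} (that is, part (d) of Lemma 3.6 above, or rather its analogue for $\ta$ on the source side), one gets for every $Z\in\modu\Lambda$ and every $i\geq 1$ an isomorphism $\Ext^{i}_{\LA}(\ta(\Omega^{r}(Z)),X_j)\simeq\Ext^{i}_{\Lambda}(\Omega^{r}(Z),X_j)$, functorial in $Z$, for $j=1,2$. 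This uses that $X_j$ is a $\LA$-module so that the strong-idempotent-type comparison of $\Ext$ groups applies.

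Finally I would assemble the chain of isomorphisms:
\[
\Ext^{t+r}_{\Lambda}(-,X_1)\simeq\Ext^{t}_{\Lambda}(\Omega^{r}(-),X_1)\simeq\Ext^{t}_{\LA}(\ta(\Omega^{r}(-)),X_1)\simeq\Ext^{t}_{\LA}(\ta(\Omega^{r}(-)),X_2)\simeq\Ext^{t}_{\Lambda}(\Omega^{r}(-),X_2)\simeq\Ext^{t+r}_{\Lambda}(-,X_2),
\]
where the middle isomorphism is exactly the hypothesis $\Ext^{t}_{\LA}(-,X_1)\simeq\Ext^{t}_{\LA}(-,X_2)$ evaluated at the object $\ta(\Omega^{r}(-))$. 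The main obstacle I expect is the careful identification of which of the results of \cite{APT} (and of the lemmas already proved in this excerpt) dualize correctly: one must make sure that "$\pd(\LA_{\Lambda})=r$" is the right hypothesis to force $\Omega^{r}(\modu\Lambda)\subset\tilde{\T}$, and that the comparison isomorphism between $\Ext_{\LA}$ and $\Ext_{\Lambda}$ holds in the first variable for modules in $\tilde{\T}$ — this is the dual of the statement used in Lemma \ref{Lema1Jue1} and should follow from the dual of \cite[Lemma 5.5]{APT}. Once the correct dual statements are cited, the rest is a formal concatenation of functorial isomorphisms as above.
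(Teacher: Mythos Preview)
Your argument is essentially correct, but it takes a different route from the paper. The paper's proof is a single line: apply the standard duality $\Ext^j_{\LA}(-,Y)\simeq \Ext^j_{(\LA)^{op}}(DY,D(-))$ (and likewise over $\Lambda$) to reduce Lemma~\ref{Lema2Jue3} to Lemma~\ref{Lema1Jue1} applied over $\Lambda^{op}$. Since $\pd(\LA_{\Lambda})=r$ becomes exactly the left-module hypothesis $\pd_{\Lambda^{op}}(\Lambda^{op}/\id^{op})=r$ needed there, nothing further has to be checked. What you do instead is unwind this duality by hand inside $\modu\Lambda$: replacing $\Omega^{-r}$ by $\Omega^{r}$, $\T$ by $\tilde\T$, and the torsion submodule by the torsion-free quotient. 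Both yield the same conclusion; the paper's version is shorter and avoids having to cite the dual of each ingredient from \cite{APT}, while yours has the virtue of making the structure parallel to Lemma~\ref{Lema1Jue1} explicit.

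Two points to clean up. First, a notational slip: in the paper's conventions $\ta=\tau_{\id}$ is the torsion part lying in $\Pinf$ for the pair $(\mathbb P_0,\modu\LA)$, so the object you should be plugging into $\Ext_{\LA}$ in the first variable is the \emph{quotient} $\Omega^{r}(Z)/\tau_{\id}\Omega^{r}(Z)\in\modu\LA$, not $\tau_{\id}\Omega^{r}(Z)$ itself. This is the correct dual of using $\tau_{\LA}(\Omega^{-r}(-))$ in Lemma~\ref{Lema1Jue1}, and it is exactly $\LA\otimes_{\Lambda}\Omega^{r}(Z)$. Second, do not invoke Lemma~3.6 of this paper: that lemma assumes $\id$ is strong idempotent, which is \emph{not} a hypothesis here. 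What you actually need is the dual of \cite[Proposition~1.1]{APT} (valid for any idempotent ideal), namely that for $W\in\tilde\T$ and $X\in\modu\LA$ one has $\Ext^{i}_{\Lambda}(W,X)\simeq\Ext^{i}_{\LA}(W/\tau_{\id}W,X)$; this follows from $\tilde\T=\{W:\Tor^{\Lambda}_{i}(\LA,W)=0\ \text{for}\ i\geq1\}$ together with the adjunction between $\LA\otimes_{\Lambda}-$ and the inclusion. With those two fixes your chain of isomorphisms goes through as written.
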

\begin{proof}
It follows from Lemma \ref{Lema1Jue1} by duality, using that $\Ext^j_{\LA}(-,Y)\simeq \Ext^j_ {\LA ^{op}}(DY,D(-))$.
\end{proof}

We prove next that when the ideal $\id$ is a strong idempotent ideal of finite projective dimension then the $\phi$ dimension of the factor algebra $\LA$ is bounded by the $\phi$ dimension of $\Lambda$.

\begin{teo}\label{desigualdades}
Let $\id$ be a strong idempotent ideal of $\Lambda$ such that $\pd(_{\Lambda}\LA)=r<\infty$. Then 
\begin{itemize}
\item[(a)] $\phi^{\LA}_l(X)\leq \phi^{\Lambda}_l(X) \, \,$  for all $ \, \, X \in \modu \LA$.

\item [(b)] $\phd\LA\leq\phd\Lambda$.
\end{itemize}
\end{teo}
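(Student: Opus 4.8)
The plan is to deduce both statements from the $\Ext$-characterization of $\phi_l$ recalled from \cite{FLM}, by transporting a $d$-division of $X$ as a $\LA$-module to a $(d+r)$-division of $X$ as a $\Lambda$-module. Concretely, for part (a) fix $X\in\modu\LA$ and put $d=\phi_l^{\LA}(X)$. If $d=0$ there is nothing to prove, so assume $d\geq 1$ and let $(Y,Z)$ be a $d$-division of $X$ in $\modu\LA$, i.e.\ $Y,Z\in\add X$ with $\add Y\cap\add Z=0$, $\Ext^d_{\LA}(Y,-)\not\simeq\Ext^d_{\LA}(Z,-)$ and $\Ext^{d+1}_{\LA}(Y,-)\simeq\Ext^{d+1}_{\LA}(Z,-)$. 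I would then check that $(Y,Z)$ is a $(d+r)$-division of $X$ viewed in $\modu\Lambda$: condition (a) is unchanged since $\add$ inside $\modu\LA$ agrees with $\add$ inside $\modu\Lambda$ for $\LA$-modules; condition (c) — that $\Ext^{d+r+1}_{\Lambda}(Y,-)\simeq\Ext^{d+r+1}_{\Lambda}(Z,-)$ — follows directly from Lemma \ref{Lema1Jue1} applied with $t=d+1\geq 1$; and condition (b), namely $\Ext^{d+r}_{\Lambda}(Y,-)\not\simeq\Ext^{d+r}_{\Lambda}(Z,-)$, is the crux.

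For condition (b) the idea is to run the chain of isomorphisms from the proof of Lemma \ref{Lema1Jue1} in reverse. Since $\id$ is strong idempotent with $\pd({}_\Lambda\LA)=r$, for any $\LA$-module $W$ one has $\Ext^{t+r}_{\Lambda}(W,-)\simeq\Ext^{t}_{\Lambda}(W,\Omega^{-r}(-))\simeq\Ext^{t}_{\LA}(W,\tla\Omega^{-r}(-))$, this last step using that $\Omega^{-r}(\modu\Lambda)\subset\T$ together with Lemma 5.5 and Proposition 1.1 of \cite{APT} exactly as in Lemma \ref{Lema1Jue1}. Hence an isomorphism $\Ext^{d+r}_{\Lambda}(Y,-)\simeq\Ext^{d+r}_{\Lambda}(Z,-)$ would restrict, via the natural transformation $\Omega^{-r}$ and the trace functor $\tla$, to an isomorphism $\Ext^{d}_{\LA}(Y,-)\simeq\Ext^{d}_{\LA}(Z,-)$ — contradicting that $(Y,Z)$ is a $d$-division of $X$ over $\LA$. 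Once $(Y,Z)$ is confirmed to be a $(d+r)$-division of $X$ in $\modu\Lambda$, the \cite{FLM} characterization gives $\phi_l^{\Lambda}(X)\geq d+r\geq d=\phi_l^{\LA}(X)$, which is even slightly stronger than claimed. Part (b) is then immediate: taking the supremum over $X\in\modu\LA$ of $\phi_l^{\LA}(X)\leq\phi_l^{\Lambda}(X)$, and noting $\phi_l^{\Lambda}(X)\leq\phd\Lambda$ for every such $X$, yields $\phd\LA\leq\phd\Lambda$.

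The main obstacle I anticipate is making precise the claim that the chain of functorial isomorphisms of the form $\Ext^{t+r}_{\Lambda}(W,-)\simeq\Ext^{t}_{\LA}(W,\tla\Omega^{-r}(-))$ is natural in $W$, so that an isomorphism of functors $\Ext^{d+r}_{\Lambda}(Y,-)\simeq\Ext^{d+r}_{\Lambda}(Z,-)$ genuinely forces $\Ext^{d}_{\LA}(Y,-)\simeq\Ext^{d}_{\LA}(Z,-)$ on the subcategory $\tla\Omega^{-r}(\modu\Lambda)$, and then that the latter isomorphism on this subcategory propagates to an isomorphism on all of $\modu\LA$ (this uses the dimension-shifting identity $\Ext^{d}_{\LA}(W,-)\simeq\Ext^{d}_{\LA}(W,\Omega^{-r}\Omega^{r}(-))$ together with the fact that $\tla$ acts as the identity on $\LA$-modules, so that every $\LA$-module arises up to the relevant $\Ext$-groups as $\tla$ of a cosyzygy). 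An alternative, cleaner route that avoids tracking naturality so delicately is to argue contrapositively at the level of \emph{existence} of divisions: if $\phi_l^{\Lambda}(X)=e$, pick an $e'$-division in $\modu\Lambda$ realizing $e$, push it back down using Lemma \ref{Lema1Jue1} in the reverse direction to bound $\phi_l^{\LA}(X)$; but this requires a descent analogue of Lemma \ref{Lema1Jue1}, which is essentially the same naturality point. Either way, the one genuinely new ingredient beyond the two preceding lemmas is the reverse implication, and I would isolate it as the technical heart of the argument.
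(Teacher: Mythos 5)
Your outline of parts (a) and (c) of the candidate division is fine: $\add Y\cap\add Z=0$ is unaffected by the ambient category, and $\Ext^{d+r+1}_{\Lambda}(Y,-)\simeq\Ext^{d+r+1}_{\Lambda}(Z,-)$ follows from Lemma~\ref{Lema1Jue1} with $t=d+1$. The gap is exactly where you locate it, namely condition (b), and you should not expect it to close: you are trying to prove more than is true. The chain $\Ext^{t+r}_{\Lambda}(W,N)\simeq\Ext^{t}_{\LA}(W,\tla\Omega^{-r}(N))$ only relates the degree-$(d+r)$ $\Lambda$-Ext to $\LA$-Ext groups evaluated on objects of the form $\tla\Omega^{-r}(N)$ with $N\in\modu\Lambda$, and there is no reason that every $\LA$-module arises this way (nor is there a well-behaved comparison $V\leadsto\tla\Omega^{-r}\Omega^{r}(V)$: the syzygies and cosyzygies are computed over $\Lambda$, so $\Omega^{-r}\Omega^{r}(V)$ is in general neither isomorphic to $V$ nor even a $\LA$-module). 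So the intended contradiction --- that $\Ext^{d+r}_{\Lambda}(Y,-)\simeq\Ext^{d+r}_{\Lambda}(Z,-)$ forces $\Ext^{d}_{\LA}(Y,-)\simeq\Ext^{d}_{\LA}(Z,-)$ --- does not follow. Indeed the consequence you would derive, $\phi^{\Lambda}_l(X)\geq d+r$, overshoots what the theorem claims, and that should itself be a warning sign.

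The paper's proof avoids the descent problem entirely with a short intermediate-degree argument that you should compare with your plan. Since $\id$ is strong idempotent, the non-isomorphism $\Ext^{d}_{\LA}(X_1,-)\not\simeq\Ext^{d}_{\LA}(X_2,-)$ transports (by restricting to $\modu\LA$) to $\Ext^{d}_{\Lambda}(X_1,-)\not\simeq\Ext^{d}_{\Lambda}(X_2,-)$. Combined with the isomorphism at degree $d+r+1$ from Lemma~\ref{Lema1Jue1}, one now only needs to observe: if $l_0>d$ is \emph{minimal} with $\Ext^{l_0}_{\Lambda}(X_1,-)\simeq\Ext^{l_0}_{\Lambda}(X_2,-)$ (it exists, $l_0\leq d+r+1$), then by minimality $\Ext^{l_0-1}_{\Lambda}(X_1,-)\not\simeq\Ext^{l_0-1}_{\Lambda}(X_2,-)$, so $(X_1,X_2)$ is an $(l_0-1)$-division of $X$ in $\modu\Lambda$ with $l_0-1\geq d$, and hence $\phi^{\Lambda}_l(X)\geq d$ by the \cite{FLM} characterization. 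This uses only the forward implication of Lemma~\ref{Lema1Jue1} and never pins down the exact degree where the division lives, which is precisely the information your approach could not extract. Replacing your condition-(b) argument with this observation repairs the proof and, with the same observation about $\phi_l$ and $\phi_r$, gives (a); (b) then follows by taking suprema as you indicate.
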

\begin{proof} Let $d$ a positive integer and assume that $X=X_1\oplus X_2$ is a $d$-division of the $\LA$-module $X$. 
This is, $\Ext_{\LA}^d(X_1, -)\not\simeq\Ext_{\LA}^d(X_2, -)$ and $\Ext_{\LA}^{d+1}(X_1, -)\simeq\Ext_{\LA}^{d+1}(X_2, -)$.
Since $\id$ is a strong idempotent ideal then $\Ext_{\Lambda}^d(X_1, -)\not\simeq\Ext_{\Lambda}^d(X_2, -)$. On the other hand, we know that $\phi_l ^\Lambda(X) = max(\{ d \in \N : $ there is a $d$-division of $ X $ in $\modu \Lambda  \}\cup\{ 0\})$, by Theorem 3.6 in \cite{FLM}. Thus to prove that $\phi^{\Lambda}_l(X) \geq d$, it is enough to find $l> d$ such that   $\Ext_{\Lambda}^l(X_1, -)\simeq\Ext_{\Lambda}^l(X_2, -)$. In fact, if $l_0 $ is minimal with this property, then $X=X_1\oplus X_2$ is an $(l_0-1)$-division of the $\Lambda$-module $X$.

Since we assumed  that $\Ext_{\LA}^{d+1}(X_1, -)\simeq\Ext_{\LA}^{d+1}(X_2, -)$ then, using Lemma 4.1,  we obtain that  $\Ext_{\Lambda}^{d+r+1}(X_1, -)\simeq\Ext_{\Lambda}^{d+r+1}(X_2, -)$.

Hence, $l= d+r+1 > d$ satisfies $\Ext_{\Lambda}^l(X_1, -)\simeq\Ext_{\Lambda}^l(X_2, -)$. So we found $l$ as required, proving that $\phi^{\Lambda}_l(X) \geq d$.  This proves (a), and (b) follows immediately. 
\end{proof}

\IP{We observe that $\phi_l$ can be replaced by $\phi_r$ in the previous theorem, since $\phi_r(X) = \phi_l(DX)$.
\vskip.2in

These results apply to any convex subcategory $\Delta$  of a quiver algebra $\Lambda = kQ/I$, where $Q$ is a finite quiver and $I$ is an admissible ideal of the path algebra $kQ$.   That is, $\Delta = kQ'/(kQ'\cap I)$, where $Q'$ is a full convex subquiver of $Q$. In this case $\Delta = \LA$, where $\id$ is the trace of the projective module $P= \bigoplus_{i \notin Q_0'} P_i$. In this situation it is known that $\id$ is a strong idempotent ideal (\cite{M}, Ch. II, Lemma 3.7) and we obtain the following corollary.

\begin{cor}
Let $\Delta$ be a full convex subcategory of the quiver algebra  $\Lambda$, and let $\id$ be the idempotent ideal such that $\Delta = \LA$. If $\id$ has finite projective dimension then  $\phd\Delta\leq\phd\Lambda$.

\end{cor}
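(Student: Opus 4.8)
The plan is to reduce the corollary directly to Theorem~\ref{desigualdades}, the only nontrivial input being the geometric identification of a convex subcategory as a quotient by an idempotent ideal. First I would make precise the setup: given a finite quiver $Q$ with admissible ideal $I\subseteq kQ$, a full convex subquiver $Q'$ (convex meaning that any path of $Q$ whose endpoints lie in $Q_0'$ lies entirely in $Q_0'$), and the associated subcategory $\Delta = kQ'/(kQ'\cap I)$. Setting $P = \bigoplus_{i\notin Q_0'} P_i$, where $P_i = \Lambda e_i$ is the indecomposable projective at vertex $i$, one has $\id := \tau_P\Lambda = \Lambda e \Lambda$ with $e = \sum_{i\notin Q_0'} e_i$, and the claim is that the canonical surjection $\Lambda \to \Lambda/\id$ is exactly the projection $kQ/I \to kQ'/(kQ'\cap I)$; this is where convexity of $Q'$ is used, since $\id$ is spanned by the paths passing through some vertex outside $Q_0'$, and convexity guarantees these are precisely the paths not supported on $Q'$. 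I would cite \cite{M}, Ch.~II, Lemma~3.7 for the fact that such an $\id$ is a strong idempotent ideal, so the hypotheses of Theorem~\ref{desigualdades} concerning strength are met.

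The remaining point is that $\pd(_\Lambda\LA) < \infty$, which is the standing hypothesis of the corollary, so it applies verbatim. Then I invoke Theorem~\ref{desigualdades}(b): since $\id$ is a strong idempotent ideal of finite projective dimension, $\phd\LA \le \phd\Lambda$, and identifying $\LA$ with $\Delta$ gives $\phd\Delta \le \phd\Lambda$ as claimed. That is essentially the entire argument; the corollary is a specialization of the theorem once the dictionary between convex subcategories and idempotent-ideal quotients is in place.

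The main obstacle, such as it is, is verifying the identification $\Delta \cong \LA$ carefully, i.e.\ that the trace ideal $\tau_P\Lambda$ is the linear span of all paths in $Q$ that factor through a vertex not in $Q_0'$. The inclusion $\supseteq$ is immediate from the definition of trace. For $\subseteq$ one notes that $\tau_P\Lambda = \Lambda e\Lambda$ is spanned by elements $p e_i q$ with $i\notin Q_0'$ and $p,q$ paths, hence by paths through $i$; and convexity of $Q'$ ensures that the complementary set (paths supported entirely on $Q_0'$) is closed under the multiplication needed to identify the quotient with $kQ'/(kQ'\cap I)$—without convexity this complement need not even be a subalgebra and the identification fails. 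Since this computation is standard and already recorded in \cite{M}, I would not reproduce it in detail but simply reference it, state the conclusion $\Delta = \LA$, and let the theorem do the rest.

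\begin{proof}
Write $\Lambda = kQ/I$ with $Q$ a finite quiver and $I$ an admissible ideal, and let $Q'$ be the full convex subquiver with $\Delta = kQ'/(kQ'\cap I)$. Set $P = \bigoplus_{i\notin Q_0'} P_i$ and $\id = \tau_P\Lambda$. By \cite{M}, Ch.~II, Lemma~3.7, the ideal $\id$ is a strong idempotent ideal and the canonical projection $\Lambda\to\LA$ can be identified with $kQ/I\to kQ'/(kQ'\cap I)$; hence $\Delta\cong\LA$. By hypothesis $\pd({}_\Lambda\LA)=r<\infty$, so Theorem~\ref{desigualdades}(b) applies and yields $\phd\LA\leq\phd\Lambda$. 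Identifying $\Delta$ with $\LA$ gives $\phd\Delta\leq\phd\Lambda$, as claimed.
\end{proof}
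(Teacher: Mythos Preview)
Your approach is exactly the paper's: identify $\Delta$ with $\LA$ via the standard description of the trace ideal for a convex subquiver, cite \cite{M}, Ch.~II, Lemma~3.7 for strong idempotence, and apply Theorem~\ref{desigualdades}(b). The paper in fact gives no separate proof of the corollary; it simply records the identification and the reference to \cite{M} in the paragraph preceding the statement.

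One small slip: the hypothesis of the corollary is that $\id$ has finite projective dimension, not that $\LA$ does, so your sentence ``By hypothesis $\pd({}_\Lambda\LA)=r<\infty$'' is not literally correct. You need the one-line observation that the short exact sequence $0\to\id\to\Lambda\to\LA\to 0$ with $\Lambda$ projective gives $\Omega(\LA)\cong\id$, hence $\pd_\Lambda(\LA)\le\pd_\Lambda(\id)+1<\infty$. With that adjustment the proof is complete and matches the paper.
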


Now we turn our attention to $\Gamma$-modules. We study the behaviour of both Igusa-Todorov functions $\phi$ and $\psi$ under the functor $\Hom_\Lambda(P, -): \rm{mod}\Lambda \rightarrow \rm{mod}\Gamma$ restricted to the subcategories  $\Pinf $ and $\Iinf$ of $ \modu \Lambda$.}

\begin{pro}\label{igualdad} For a $\Lambda$-module $Y\ \in \Pinf$ the following properties hold:

\begin{itemize}
\item[(a)]
 there exists a $d$-division of $Y$ if and only if there exists a $d$-division of $\Hom_{\Lambda}(P,Y)$.
\item[(b)]
 $\phi^{\Lambda}_l(Y)=\phi_l^{\Gamma}(\Hom_{\Lambda}(P,Y)).$
\item [(c)]
 $\psi_l^{\Lambda}(Y)=\psi_l^{\Gamma}(\Hom_{\Lambda}(P,Y)).$
\end{itemize}
\end{pro}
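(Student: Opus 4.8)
The plan is to use the equivalence $\Hom_\Lambda(P,-)\colon \Pinf \to \modu\Gamma$ (with quasi-inverse $-\otimes_\Gamma P$) together with the comparison of Ext-groups from \cite{APT}, Theorem 3.2. The core observation is that the characterization of $\phi$ from \cite{FLM} expresses $\phi_l(Y)$ entirely in terms of the functors $\Ext^i_\Lambda(-,-)$ and the decomposition of $Y$ into indecomposable direct summands; since $\Hom_\Lambda(P,-)$ restricted to $\Pinf$ is an equivalence onto $\modu\Gamma$, it preserves direct sum decompositions, indecomposability, and the condition $\add(X)\cap\add(Y)=0$ appearing in the definition of a $d$-division. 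So the heart of the matter is (a), and (b), (c) should follow.

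First I would prove (a). Write $Y = Y_1\oplus Y_2$ as a candidate $d$-division in $\modu\Lambda$; since $Y\in\Pinf$ and $\Pinf$ is closed under summands, both $Y_1,Y_2\in\Pinf$. The key point is that for modules in $\Pinf$ the functor $\Hom_\Lambda(P,-)$ induces isomorphisms $\Ext^i_\Lambda(Y_j,-)\simeq \Ext^i_\Gamma((P,Y_j),(P,-))$ for all $i\geq 1$, using \cite{APT}, Theorem 3.2 (with $X=Y_j\in\Pinf$, which forces $Y_j\in\mathbb P_{k+1}$ for every $k$). Hence $\Ext^d_\Lambda(Y_1,-)\simeq\Ext^d_\Lambda(Y_2,-)$ in $\modu\Lambda$ if and only if the corresponding natural transformation of functors on $\modu\Gamma$ is an isomorphism; here I must be a little careful that ``$\simeq$ as functors on $\modu\Lambda$'' translates correctly to ``$\simeq$ as functors on $\modu\Gamma$''. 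For this I would invoke that every $\Gamma$-module is $\Hom_\Lambda(P,Z)$ for some $Z\in\modu\Lambda$ (indeed $Z\in\Pinf$, since $e_P$ is essentially surjective on $\Pinf$), so a natural transformation on all of $\modu\Gamma$ is an isomorphism iff it is one on the subcategory $e_P(\Pinf)=\modu\Gamma$ — i.e.\ the issue is vacuous — and conversely, an isomorphism on $\modu\Gamma$ pulls back. Combining, $(Y_1,Y_2)$ is a $d$-division of $Y$ in $\modu\Lambda$ if and only if $((P,Y_1),(P,Y_2))$ is a $d$-division of $(P,Y)$ in $\modu\Gamma$, using that the equivalence sends $\add Y \to \add(P,Y)$ faithfully.

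Then (b) is immediate: by \cite{FLM}, Theorem 3.6, $\phi_l^\Lambda(Y) = \max(\{d : \text{there is a $d$-division of }Y\}\cup\{0\})$ and likewise $\phi_l^\Gamma((P,Y))=\max(\{d:\text{there is a $d$-division of }(P,Y)\}\cup\{0\})$, and by (a) these two sets of $d$'s coincide. For (c), recall $\psi_l(Y)=\phi_l(Y)+\sup\{\pd X : X \mid \Omega^{\phi_l(Y)}(Y),\ \pd X<\infty\}$. Having matched the $\phi$-values, I need that $\Hom_\Lambda(P,-)$ identifies the summands of $\Omega^{\phi_l(Y)}(Y)$ with those of $\Omega^{\phi_l(Y)}((P,Y))$ and preserves their projective dimension. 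For this I would use that $\Omega_\Lambda$ of a module in $\Pinf$ lies again in $\Pinf$ (its projective cover is in $\add P$ and then one uses that the full resolution stays in $\add P$ when starting in $\Pinf$; more precisely $\Pinf = \mathbb P_\infty$ is closed under syzygies by definition), so $\Omega^n_\Lambda(Y)\in\Pinf$ and $\Hom_\Lambda(P,\Omega^n_\Lambda(Y)) \simeq \Omega^n_\Gamma((P,Y))$ because $\Hom_\Lambda(P,-)$ is exact and carries $\add P$ to $\add\Gamma$ (projectives to projectives); finally $\pd_\Lambda X = \pd_\Gamma(P,X)$ for $X\in\Pinf$, again since the equivalence sends a projective resolution in $\add P$ to a projective resolution in $\add\Gamma$ and is faithful. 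Putting these together gives $\psi_l^\Lambda(Y)=\psi_l^\Gamma((P,Y))$.

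The main obstacle I anticipate is the bookkeeping in (a) around the phrase ``$\Ext^d_\Lambda(X,-)\simeq\Ext^d_\Lambda(Y,-)$ in $\modu\Lambda$'': one must check that the natural transformation realizing this isomorphism is the one induced by a morphism (or that isomorphism of the functors is detected on enough test modules), so that it transports cleanly along the equivalence $e_P$ in both directions; the comparison isomorphism of \cite{APT}, Theorem 3.2, is a morphism of connected sequences of functors, which is exactly what is needed, but stating this precisely and invoking that $e_P$ is full, faithful and dense on $\Pinf$ with the right behaviour on Ext is the delicate point. Everything else — closure of $\Pinf$ under summands and syzygies, exactness of $\Hom_\Lambda(P,-)$, and preservation of projectivity and of $\pd$ — is routine.
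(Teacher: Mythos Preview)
Your approach mirrors the paper's almost exactly: the same Ext-comparison from \cite{APT}, Theorem~3.2(c), the same appeal to \cite{FLM}, Theorem~3.6, for (b), and the same tracking of syzygies and projective dimensions through $\Hom_\Lambda(P,-)$ for (c). Two small corrections: the equivalence induced by $\Hom_\Lambda(P,-)$ is $\Puno\to\modu\Gamma$, not $\Pinf\to\modu\Gamma$, so your parenthetical ``indeed $Z\in\Pinf$'' is unwarranted in general (though harmless, since you only use $Z\in\modu\Lambda$); and in (c) you should record that $\Hom_\Lambda(P,-)$ takes a \emph{minimal} projective resolution of $Y\in\Pinf$ to a \emph{minimal} one of $(P,Y)$ (this is \cite{APT}, Lemma~3.1, which the paper cites), so that $\Hom_\Lambda(P,\Omega^n_\Lambda Y)\simeq \Omega^n_\Gamma(P,Y)$ on the nose rather than only up to projective summands.
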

\begin{proof}

(a)
Since $\Hom_\Lambda(P, -): \rm{mod}\Lambda \rightarrow \rm{mod}\Gamma$ induces an equivalence of categories $\Puno \rightarrow \rm{mod}\Gamma$ and $Y\in \Pinf \subseteq\Puno $,   it follows that $Y=Y_1\oplus Y_2$ if and only if $\Hom_{\Lambda}(P,Y)=\Hom_{\Lambda}(P,Y_1)\oplus\Hom_{\Lambda}(P,Y_2)$. The statement follows from the fact that $Y_1, Y_2\in \Pinf$ implies $\Ext^i_{\Lambda}(Y_j,-)\simeq \Ext^i_{\Gamma}(\Hom_{\Lambda}(P,Y_j), \Hom_{\Lambda}(P,-))$, for $j=1,2$ and for all $i \geq 0$ (\cite{APT},Theorem 3.2,(c)).
\vskip .05in

(b)
This is a direct consequence of (a), using that $\phi^{\Lambda}_l(Y)=n$ if and only if $n=$max$(\{\ d\in\N:$ there exists a $d$-division of $Y\ \}\cup {0})$ by Theorem 3.6 in \cite{FLM}.
\vskip .05in

(c)
Let $X\in $ mod$\Gamma$. Then $\psi_l^\Gamma(X) = n+l$, with $n= \phi_l^\Gamma(X)$ and $l=\pd Z_1,$ where $Z_1$ is the largest summand of $\Omega^n(X)$ of finite projective dimension. We write $\Omega^n(X)= Z_1 \oplus Z_2$.

Let $Y\in  \Pinf \subseteq\Puno$ be such that $X = \Hom_\Lambda(P,Y)$. Since $\phi_l^\Lambda (Y) = \phi_l^\Gamma(X)$, we only need to prove that $l$ is the projective dimension of largest summand $Y_1$ of $\Omega^n(Y)$  of finite projective dimension. Let $\Omega^n(Y) = Y_1 \oplus Y_2$ and let $$ \cdots P_{n+1} \rightarrow P_n \rightarrow \cdots \rightarrow P_0 \rightarrow Y \rightarrow 0$$ be a minimal projective resolution of $Y$ in mod$\Lambda$. Since $Y\in \Pinf $ then
 $$ \cdots \rightarrow \Hom_\Lambda(P,P_n) \rightarrow \cdots \rightarrow \Hom_\Lambda(P,P_0) \rightarrow \Hom_\Lambda(P,Y) \rightarrow 0$$ 
\noindent is a minimal projective resolution of $X =\Hom_\Lambda(P,Y)$ in mod$\Gamma$, \MIP {as follows from 
 \cite{APT}, Lemma 3.1.}

Therefore $\Omega^n(X)\simeq  \Hom_\Lambda(P,\Omega^n(Y))$. This is, $Z_1 \oplus Z_2\simeq\Hom_\Lambda(P,Y_1) \oplus \Hom_\Lambda(P,Y_2)$. Since $Y\in \Pinf$, then $\Omega^n(Y)$  and   all  direct summands of $\Omega^n (Y)$ are also in $\Pinf$. Thus $\pd L = \pd \Hom_\Lambda (P,L)$ for any summand L of $\Omega^n(Y)$ (see \cite{APT}, Corollary 3.3). From this we conclude that the projective dimensions of the largest summands of finite projective dimension of $\Omega^n(X)$ and $\Omega^n(Y) $ coincide, as desired.\end{proof}

We state the corresponding result for the $\phi$-injective dimension in the next proposition.

\begin{pro} For a  $\Lambda$-module $Y\ \in \Iinf$ the following properties hold:

\begin{itemize}
\item[(a)]
 there exists a $d$-division of $Y$ if and only if there exists a $d$-division of $\Hom_{\Lambda}(P,Y)$.
\item[(b)]
 $\phi_r^{\Lambda}(Y)=\phi_r^{\Gamma}(\Hom_{\Lambda}(P,Y)).$
\item[(c)]
 $\psi_r^{\Lambda}(Y)=\psi_r^{\Gamma}(\Hom_{\Lambda}(P,Y)).$
\end{itemize}
\end{pro}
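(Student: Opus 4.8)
The plan is to deduce this proposition from Proposition~\ref{igualdad} by passing to the opposite algebra, in the same spirit in which Lemma~\ref{Lema2Jue3} was obtained from Lemma~\ref{Lema1Jue1}. The idempotent ideal $\id=\Lambda e\Lambda$ of $\Lambda$ is at the same time an idempotent ideal of $\Lambda^{op}$; for this opposite datum the associated projective module is the right $\Lambda$-module $e\Lambda$, and its endomorphism algebra is $\End_{\Lambda^{op}}(e\Lambda)^{op}\cong\Gamma^{op}$ (indeed $\Gamma\cong e\Lambda e$). I will write $\Pinf^{\mathrm{op}}$ for the category $\Pinf$ attached to this datum over $\Lambda^{op}$. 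The first thing to record is that $D$ carries $\Iinf$ onto $\Pinf^{\mathrm{op}}$: applying $D$ to an injective coresolution of $Y$ with terms in $\add I$ gives a projective resolution of $DY$ with terms in $\add(DI)$, and $\add(DI)=\add(e\Lambda)$ since $I=I_0(S_1\oplus\cdots\oplus S_r)$, $P=P_0(S_1\oplus\cdots\oplus S_r)$, and $D$ interchanges injective envelopes with projective covers and sends simple modules to simple modules. Hence $Y\in\Iinf$ if and only if $DY\in\Pinf^{\mathrm{op}}$.

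The second ingredient is the natural isomorphism of $\Gamma^{op}$-modules $D\,\Hom_\Lambda(P,Y)\simeq\Hom_{\Lambda^{op}}(e\Lambda,DY)$, which comes from $\Hom_\Lambda(\Lambda e,Y)\simeq eY$ together with $\Hom_{\Lambda^{op}}(e\Lambda,DY)\simeq(DY)e\simeq D(eY)$, all natural in $Y$. Granting these two facts, part~(b) is the chain $\phi_r^\Lambda(Y)=\phi_l^{\Lambda^{op}}(DY)=\phi_l^{\Gamma^{op}}\big(\Hom_{\Lambda^{op}}(e\Lambda,DY)\big)=\phi_l^{\Gamma^{op}}\big(D\Hom_\Lambda(P,Y)\big)=\phi_r^\Gamma\big(\Hom_\Lambda(P,Y)\big)$, where the first and last equalities are the definition $\phi_r(M)=\phi_l(DM)$ and the middle one is Proposition~\ref{igualdad}(b) applied to $\Lambda^{op}$, legitimate because $DY\in\Pinf^{\mathrm{op}}$. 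Part~(c) is the same computation with $\psi$ in place of $\phi$, using Proposition~\ref{igualdad}(c) and $\psi_r(M)=\psi_l(DM)$. For part~(a) (where ``$d$-division'' should be read as ``$d$-injective division'', to be consistent with (b)) one first checks that a pair $(W_1,W_2)$ in $\add W$ is a $d$-injective division of a $\Gamma$-module $W$ if and only if $(DW_1,DW_2)$ is a $d$-division of the $\Gamma^{op}$-module $DW$, since under $D$ the functor $\Ext^i_\Gamma(-,W_j)$ goes to $\Ext^i_{\Gamma^{op}}(DW_j,-)$ and $\add$ is preserved by $D$; then one chains ``$d$-injective division of $Y$'' $\Leftrightarrow$ ``$d$-division of $DY$'' $\Leftrightarrow$ ``$d$-division of $\Hom_{\Lambda^{op}}(e\Lambda,DY)$'' (Proposition~\ref{igualdad}(a) for $\Lambda^{op}$) $\Leftrightarrow$ ``$d$-injective division of $\Hom_\Lambda(P,Y)$'', the last step by the isomorphism of the second ingredient.

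The step I expect to be the main obstacle is the bookkeeping behind $D\Iinf=\Pinf^{\mathrm{op}}$: one must be sure that the projective right $\Lambda$-module attached to $\id$ inside $\Lambda^{op}$ is exactly $e\Lambda$, equivalently that $\add(DI)=\add(e\Lambda)$, which rests on the standard duality facts just quoted together with the normalization of $e$ and $I$ fixed in the Preliminaries; everything else is routine transport along $D$. Alternatively, one can avoid the opposite algebra and mirror the proof of Proposition~\ref{igualdad} directly, using that $\Hom_\Lambda(P,-)$ restricts to an equivalence $\Iuno\to\modu\Gamma$, that $\Ext^i_\Lambda(-,Y_j)\simeq\Ext^i_\Gamma(\Hom_\Lambda(P,-),\Hom_\Lambda(P,Y_j))$ for all $i\geq 0$ when $Y_1,Y_2\in\Iinf$ (Theorem 3.2 of \cite{APT}, since $\Iinf=\bigcap_{k\geq 0}\I_k$), and the dual versions of Lemma 3.1 and Corollary 3.3 of \cite{APT} to control minimal injective coresolutions and injective dimensions for part~(c).
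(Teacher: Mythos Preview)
Your proof is correct. The paper's own argument is exactly your ``alternative'': it dualizes the proof of Proposition~\ref{igualdad} in place, using that $\Hom_\Lambda(P,-):\Iuno\to\modu\Gamma$ is an equivalence and the isomorphism $\Ext^i_\Lambda(-,Y)\simeq\Ext^i_\Gamma(\Hom_\Lambda(P,-),\Hom_\Lambda(P,Y))$ for $Y\in\Iinf$ from \cite{APT}, Theorem~3.2(b), together with the dual versions of the auxiliary facts used in Proposition~\ref{igualdad}. Your primary route instead transports the whole statement to $\Lambda^{op}$ via $D$, applies Proposition~\ref{igualdad} there, and transports back using $D\Iinf=\Pinf^{\mathrm{op}}$ and $D\,\Hom_\Lambda(P,Y)\simeq\Hom_{\Lambda^{op}}(e\Lambda,DY)$. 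This is a genuinely different packaging: it avoids re-running the argument and reduces everything to a single invocation of the already-proved Proposition~\ref{igualdad}, at the price of the bookkeeping about the opposite datum that you correctly flag (and carry out). The paper's direct dualization is shorter to state but implicitly asks the reader to redo each step; your transport-along-$D$ argument makes the dependence on Proposition~\ref{igualdad} explicit. Your observation that ``$d$-division'' in part~(a) must be read as ``$d$-injective division'' is also correct and matches how the paper uses the result afterwards.
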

\begin{proof}
The result follows using that $\Hom_{\Lambda}(P,-): \Iuno \rightarrow \rm{mod}\Gamma$ is an equivalence of categories, the fact that  $\Ext^i_{\Lambda}(-,Y)\simeq \Ext^i_{\Gamma}(\Hom_{\Lambda}(P,-), \Hom_{\Lambda}(P,Y))$ for all $Y \in \Iinf$ and $i \geq 0$  (\cite{APT}, Theorem 3.2, (b)),
 and dualizing arguments in the proof of the previous \MIP {proposition}.\end{proof}

Since $\Hom_\Lambda(P, -): \rm{mod}\Lambda \rightarrow \rm{mod}\Gamma$ induces  equivalences of categories $\Puno \rightarrow \rm{mod}\Gamma$ and $\Iuno \rightarrow \rm{mod}\Gamma$, the previous \MIP {propositions} yield the following result.

\begin{teo}\label{puno=pinfinito}
\begin{itemize}
\item[(a)] If $\Puno=\Pinf$ then $\phd\Gamma\leq\phd\Lambda$ and ${\psi_l} {\rm dim}\Gamma \leq {\psi_l} {\rm dim} \Lambda$.

\item[(b)] If $\Iuno=\Iinf$ then $\phinjd \Gamma\leq\phinjd\Lambda$ and $\psi_l {\rm dim}\Gamma\leq\psi_l {\rm dim}\Lambda$.
\end{itemize}
\end{teo}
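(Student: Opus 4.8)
The plan is to deduce Theorem \ref{puno=pinfinito} directly from Propositions \ref{igualdad} and its injective-dual counterpart, using the equivalences $\Hom_\Lambda(P,-)\colon \Puno\to\modu\Gamma$ and $\Iuno\to\modu\Gamma$ established in \cite{APT}. The essential observation is that every $\Gamma$-module is of the form $\Hom_\Lambda(P,Y)$ for some $Y\in\Puno$, and that under the hypothesis $\Puno=\Pinf$ we may in fact take $Y\in\Pinf$, which is exactly the hypothesis needed to apply Proposition \ref{igualdad}.

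For part (a): let $X\in\modu\Gamma$ be arbitrary. Since $\Hom_\Lambda(P,-)$ restricts to an equivalence $\Puno\to\modu\Gamma$, there is $Y\in\Puno$ with $X\simeq\Hom_\Lambda(P,Y)$. The assumption $\Puno=\Pinf$ gives $Y\in\Pinf$, so Proposition \ref{igualdad}(b) yields $\phi_l^\Gamma(X)=\phi_l^\Gamma(\Hom_\Lambda(P,Y))=\phi_l^\Lambda(Y)\le\phd\Lambda$, and Proposition \ref{igualdad}(c) gives $\psi_l^\Gamma(X)=\psi_l^\Lambda(Y)\le\psi_l{\rm dim}\,\Lambda$. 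Taking the supremum over all $X\in\modu\Gamma$ gives $\phd\Gamma\le\phd\Lambda$ and $\psi_l{\rm dim}\,\Gamma\le\psi_l{\rm dim}\,\Lambda$. Part (b) is entirely parallel, replacing $\Puno,\Pinf$ by $\Iuno,\Iinf$, invoking the injective-dual proposition (the one immediately preceding Theorem \ref{puno=pinfinito}) in place of Proposition \ref{igualdad}, and working with $\phi_r,\psi_r$ throughout.

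There is essentially no obstacle here: the theorem is a packaging of the two preceding propositions once one notices that the only thing preventing their application to an arbitrary $\Gamma$-module is knowing that the preimage $Y$ under the equivalence lies in $\Pinf$ (resp. $\Iinf$) rather than merely in $\Puno$ (resp. $\Iuno$) — and this is precisely what the hypotheses $\Puno=\Pinf$ (resp. $\Iuno=\Iinf$) supply. The only point requiring a word of care is that the supremum defining $\phd\Gamma$ ranges over all of $\modu\Gamma$, so one must confirm that every $\Gamma$-module, not just some distinguished class, is hit by the equivalence; this is immediate from the fact that $\Hom_\Lambda(P,-)\colon\Puno\to\modu\Gamma$ is an equivalence, hence in particular dense.

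Concretely, I would write: \emph{Proof.} We prove (a); part (b) is dual. Let $X\in\modu\Gamma$. Since $\Hom_\Lambda(P,-)$ induces an equivalence $\Puno\to\modu\Gamma$, there exists $Y\in\Puno$ with $X\simeq\Hom_\Lambda(P,Y)$, and by hypothesis $Y\in\Puno=\Pinf$. Then Proposition \ref{igualdad} gives $\phi_l^\Gamma(X)=\phi_l^\Lambda(Y)\le\phd\Lambda$ and $\psi_l^\Gamma(X)=\psi_l^\Lambda(Y)\le\psi_l{\rm dim}\,\Lambda$. As $X$ was arbitrary, $\phd\Gamma\le\phd\Lambda$ and $\psi_l{\rm dim}\,\Gamma\le\psi_l{\rm dim}\,\Lambda$. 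For (b) one argues identically with $\Iuno=\Iinf$ and the preceding proposition, using $\phi_r(Y)=\phi_l(DY)$ and $\psi_r(Y)=\psi_l(DY)$. $\Box$
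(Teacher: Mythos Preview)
Your proposal is correct and follows exactly the approach the paper takes: the paper's proof is a single sentence noting that the result follows from Propositions \ref{igualdad} and its dual via the equivalences $\Hom_\Lambda(P,-)\colon\Puno\to\modu\Gamma$ and $\Iuno\to\modu\Gamma$, and you have simply spelled out this deduction in detail. One minor remark: in part (b) the theorem as stated in the paper writes $\psi_l{\rm dim}$ rather than $\psi_r{\rm dim}$, which appears to be a typo; your argument correctly yields the $\psi_r$ inequality, which is what Proposition 4.6 actually supports.
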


\IP{Our next objective is to find bounds for the $\phi$ dimension of $\Lambda$ in terms of the $\phi$ dimensions of $\LA$ and $\Gamma$.

We start with the case when the global dimension of $\Gamma $ is finite. Let $\T$ be the subcategory  of mod$\Lambda$  considered in the previous section, consisting of the modules $T$ such that $\Ext^{i}_{\Lambda}(\LA ,T)=0$ for  all $i\geq 1$. 
We observe first that  bounds for the function $\phi$ in the subcategory $\T$ will give us  bounds for $\phi$ in mod $\Lambda$, as the following lemma shows.

\begin{lem}\label{otro} Let $\id$ be an idempotent ideal. Then 
\begin{itemize}
\item[(a)] $\phinjd (\Lambda)\leq \pd_{\Lambda}(\LA) + \phinjd(\T)$

\item[(b)] $\phd (\Lambda)\leq \pd(\LA)_{\Lambda} + \phd(\tilde \T)$.
\end{itemize}
\end{lem}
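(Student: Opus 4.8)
The plan is to derive both inequalities from a single elementary monotonicity property of the Igusa--Todorov functions: for every $M\in\modu\Lambda$ and every $k\ge 0$ one has $\phi_l(M)\le k+\phi_l(\Omega^{k}M)$, and dually $\phi_r(M)\le k+\phi_r(\Omega^{-k}M)$. I would first record this. Writing $M=M_1\oplus\cdots\oplus M_t$ with each $M_j$ indecomposable and using Krull--Schmidt, every indecomposable summand of $\Omega^{k}M_j$ is an indecomposable summand of $\Omega^{k}M$, hence $\Omega^{k}\langle\add M\rangle\subseteq\langle\add\Omega^{k}M\rangle$ inside $K_0$. If $n=\phi_l(\Omega^{k}M)$, then $\Omega$ is injective on $\Omega^{s}\langle\add\Omega^{k}M\rangle$ for all $s\ge n$, and therefore injective on the subgroups $\Omega^{k+s}\langle\add M\rangle$; since $\Omega$ maps each of these onto the next one by definition, it is an isomorphism there for all $s\ge n$, which gives $\phi_l(M)\le k+n$. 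The cosyzygy statement is proved in the same way.

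For part (a) I may assume $r=\pd_{\Lambda}(\LA)<\infty$, since otherwise the right-hand side is $\infty$. For any $M\in\modu\Lambda$, dimension shifting yields $\Ext^{i}_\Lambda(\LA,\Omega^{-r}M)\simeq\Ext^{i+r}_\Lambda(\LA,M)=0$ for all $i\ge 1$, so $\Omega^{-r}M\in\T$; this is precisely the inclusion $\Omega^{-r}(\modu\Lambda)\subseteq\T$ already used in the proof of Lemma~\ref{Lema1Jue1}. Applying the monotonicity property with $k=r$ to cosyzygies, $\phi_r(M)\le r+\phi_r(\Omega^{-r}M)\le r+\phinjd(\T)$, and taking the supremum over $M\in\modu\Lambda$ gives $\phinjd(\Lambda)\le\pd_{\Lambda}(\LA)+\phinjd(\T)$.

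Part (b) follows by the same argument applied to syzygies. We may assume $r=\pd(\LA)_{\Lambda}<\infty$; then $\injd D(\LA_\Lambda)=r$, so for any $M$ dimension shifting gives $\Ext^{i}_\Lambda(\Omega^{r}M,D(\LA_\Lambda))\simeq\Ext^{i+r}_\Lambda(M,D(\LA_\Lambda))=0$ for $i\ge 1$, i.e.\ $\Omega^{r}M\in\tilde\T$. Hence $\phi_l(M)\le r+\phi_l(\Omega^{r}M)\le r+\phd(\tilde\T)$, and taking the supremum over $M$ yields $\phd(\Lambda)\le\pd(\LA)_{\Lambda}+\phd(\tilde\T)$. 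Alternatively, (b) is (a) applied to $\Lambda^{op}$: using $\phi_r^{\Lambda^{op}}(DN)=\phi_l^\Lambda(N)$ one has $\phinjd(\Lambda^{op})=\phd(\Lambda)$, the $\Lambda^{op}$-module $\Lambda^{op}/\id$ is $\LA$ regarded as a right $\Lambda$-module, and $D$ carries the subcategory $\T$ of $\modu\Lambda^{op}$ onto $\tilde\T$.

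The only point that needs care is the monotonicity property of the first paragraph, and within it the passage from injectivity of $\Omega$ on the ambient group $\Omega^{s}\langle\add\Omega^{k}M\rangle$ down to injectivity on the subgroup $\Omega^{k+s}\langle\add M\rangle$; everything after that is routine dimension shifting together with the left/right-module and $D$-duality bookkeeping.
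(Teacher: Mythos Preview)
Your proof is correct and follows essentially the same route as the paper's. The paper disposes of the infinite case, uses that $\Omega^{-t}(\modu\Lambda)\subseteq\T$ when $t=\pd_\Lambda(\LA)$ (citing \cite{APT}, Lemma~5.5, where you do the dimension shift by hand), then invokes the inequality $\phi_r(X)\le t+\phi_r(\Omega^{-t}X)$ (citing \cite{HLM1}, Lemma~1.3, where you supply the $K_0$ argument directly), and obtains (b) by duality; your argument is the same skeleton with the cited lemmas unpacked.
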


\begin{proof}If $\pd_{\Lambda}(\LA)=\infty$ there is nothing to prove. Assume $\pd_{\Lambda}(\LA)=t<\infty$ and let $X \in \modu \Lambda$.
Then by  Lemma 5.5 in \cite{APT}, $\Omega^{-t}(X)\in \T$. The lemma follows \MIP{now} by repeated use of the dual of the inequality of 
Lemma 1.3 in \cite{HLM1}, $\phi_r(X)\leq t+ \phi_r(\Omega^{-t}(X)$
. This proves (a), and (b) follows by duality.
\end{proof}

Observe now that \MIP{when} $\id$ is a strong idempotent ideal then $\id$ is in $\Pinf$ (\cite{APT}, Theorem 2.1'), so $\pd_{\Lambda}(\id)\leq \pd_{\Gamma}(\Hom_{\Lambda}(P,\id))\leq \gld \Gamma $. Thus 
$\pd_{\Lambda}(\LA)\leq  \gld \Gamma +1$. Since being a strong idempotent ideal is a symmetric condition we obtain that
$\pd (\LA)_{\Lambda} \leq  \gld \Gamma +1$, as observed in \cite{APT} at the end of section 5. 
 }

\begin{pro}\label{T}
Let $\id$ be a strong idempotent ideal.  
Then 
$$\phinjd(\T)\leq \rm{max}\{\ \gld(\Gamma)+1,\ \phinjd(\LA)+ \pd(\LA)_{\Lambda}\ \}. $$
\end{pro}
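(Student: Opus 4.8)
The plan is to estimate $\phi_r(T)$ for an arbitrary $T \in \T$ by reducing the computation to the module $T/\tla T$, which lies in $\Iinf$, together with the factor $\tla T$, which lies in $\modu\LA$. First I would fix $T$ in $\T$ and consider its glueing sequence $0 \to \tla T \to T \to T/\tla T \to 0$, which is available by Proposition 3.3(a). The idea is that the right-hand term behaves like a $\Gamma$-module and the left-hand term like a $\LA$-module, so that an injective $d$-division of $T$ can only reach $d$ up to the maximum of the corresponding bounds for those two categories. Concretely, suppose $T = T_1 \oplus T_2$ is a $d$-injective division of $T$, i.e. $\Ext^d_\Lambda(-,T_1) \not\simeq \Ext^d_\Lambda(-,T_2)$ while $\Ext^{d+1}_\Lambda(-,T_1) \simeq \Ext^{d+1}_\Lambda(-,T_2)$ (using the Ext-characterization of $\phi_r$ from \cite{FLM}). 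Each $T_j$ is again in $\T$, so each has its own glueing sequence, and $T_j/\tla T_j \in \Iinf$, $\tla T_j \in \modu\LA$.

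The key step is to apply Lemma 3.7(b): from $\Ext^{d+1}_\Lambda(-,T_1) \simeq \Ext^{d+1}_\Lambda(-,T_2)$ one gets $\Ext^{d+1}_\Lambda(-,T_1/\tla T_1)|_{\tilde\T} \simeq \Ext^{d+1}_\Lambda(-,T_2/\tla T_2)|_{\tilde\T}$, and since the $L_j := T_j/\tla T_j$ lie in $\Iinf$ one uses Proposition 4.7 (or rather its predecessor) to transport the comparison to $\modu\Gamma$ via $\Hom_\Lambda(P,-)$. If $d+1 > \gld\Gamma$, then automatically $\Ext^{d+1}_\Gamma(-,\Hom_\Lambda(P,L_1)) = 0 = \Ext^{d+1}_\Gamma(-,\Hom_\Lambda(P,L_2))$ and one tries to push this back. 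The second ingredient handles the $\tla T_j$ part: since $\Ext^j_\Lambda(-,T_i)$ agree for $j=d+1$, restricting along the glueing sequence and using the long exact sequence in the second variable, one extracts information about $\Ext^\bullet_\Lambda(-,\tla T_i)$, which is controlled by $\phinjd(\LA)$ together with the shift $\pd(\LA)_\Lambda$ via Lemma 4.2. Combining: if $d$ exceeds $\max\{\gld\Gamma+1,\ \phinjd(\LA)+\pd(\LA)_\Lambda\}$, one derives that already $\Ext^d_\Lambda(-,T_1)\simeq\Ext^d_\Lambda(-,T_2)$, contradicting that $(T_1,T_2)$ was a $d$-injective division. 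Hence $\phi_r(T)$ cannot exceed that maximum, and taking the supremum over $T \in \T$ gives the claim.

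The main obstacle I anticipate is the bookkeeping in separating the two contributions cleanly: a $d$-injective division of $T$ does not split as a "direct sum" of an injective division of $T/\tla T$ in $\modu\Gamma$ and one of $\tla T$ in $\modu\LA$, so one must argue via the long exact sequences attached to the glueing sequences of $T_1$ and $T_2$ simultaneously and chase a two-by-two commutative diagram of Ext-groups (as in the proof of Lemma 3.7(b)) to conclude that the vanishing/isomorphism at level $d+1$ forces isomorphism at level $d$. Making the dimension shift by $\pd(\LA)_\Lambda$ interact correctly with the degree in which the $\Gamma$-comparison of \cite{APT}, Theorem 3.2 holds — i.e. ensuring the isomorphism $\Ext^i_\Lambda(-,Y)|_{\Iinf}\simeq\Ext^i_\Gamma(\Hom_\Lambda(P,-),\Hom_\Lambda(P,Y))|_{\Iinf}$ is applied in a range where it is valid — is the delicate point. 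The reduction $\pd_\Lambda(\LA)\leq\gld\Gamma+1$ recorded just before the proposition is what ties the $\gld\Gamma+1$ term to the hypotheses and will be invoked to cover the "$\tla T$ in $\modu\LA$" part when $\phinjd(\LA)$ is finite.
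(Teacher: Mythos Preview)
Your overall architecture is right---split $T$ via the glueing sequence, bound the $T/\tla T$ contribution by $\gld\Gamma$ and the $\tla T$ contribution by $\phinjd(\LA)+\pd(\LA)_\Lambda$---and this is exactly the paper's strategy. But your route through Lemma~3.6(b) (your ``Lemma~3.7(b)'') is a detour that leaves a gap: that lemma only yields $\Ext^{d+1}_\Lambda(-,L_1)|_{\tilde\T}\simeq\Ext^{d+1}_\Lambda(-,L_2)|_{\tilde\T}$, i.e.\ isomorphism restricted to $\tilde\T$, and your ``push this back'' step from $\Gamma$ to $\modu\Lambda$ is never made precise. You need conclusions valid on all of $\modu\Lambda$, not just on $\tilde\T$.

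The paper avoids this entirely. Since $T/\tla T\in\Iinf$, Corollary~3.3(b) of \cite{APT} gives $\injd_\Lambda(T/\tla T)=\injd_\Gamma\Hom_\Lambda(P,T/\tla T)\le\gld\Gamma$, so $\Ext^i_\Lambda(-,T_j/\tla T_j)=0$ for all $i>\gld\Gamma$ and on \emph{all} of $\modu\Lambda$. The long exact sequence of the glueing sequence then gives directly $\Ext^i_\Lambda(-,\tla T_j)\simeq\Ext^i_\Lambda(-,T_j)$ for $i>\gld\Gamma+1$. Hence if $d>\gld\Gamma+1$ the $d$-injective division of $T$ transfers verbatim to one of $\tla T$ in $\modu\Lambda$: $\Ext^d_\Lambda(-,\tla T_1)\not\simeq\Ext^d_\Lambda(-,\tla T_2)$ and $\Ext^{d+1}_\Lambda(-,\tla T_1)\simeq\Ext^{d+1}_\Lambda(-,\tla T_2)$. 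The strong idempotent hypothesis restricts the latter isomorphism to $\modu\LA$, and the \emph{contrapositive} of Lemma~4.2 turns the former non-isomorphism into $\Ext^{d-r}_{\LA}(-,\tla T_1)\not\simeq\Ext^{d-r}_{\LA}(-,\tla T_2)$. Thus $\tla T$ admits a $j$-injective division in $\modu\LA$ for some $d-r\le j\le d$, whence $d\le\phinjd(\LA)+r$. Your proposal gestures at Lemma~4.2 but does not isolate this contrapositive use, which is the mechanism producing the shift by $r$. Drop the $L_j$/$\tilde\T$ detour, use the injective-dimension bound on $T/\tla T$ to identify $\Ext^i_\Lambda(-,T)$ with $\Ext^i_\Lambda(-,\tla T)$, and the argument closes cleanly.
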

\begin{proof}

Let now $r= \pd(\LA)_{\Lambda}$,  $T$ in $\T $ and consider the glueing sequence  $0 \rightarrow \tla T \rightarrow T \rightarrow T/\tla T \rightarrow  0$  in $\T$. Since $T/\tla T$ is in $\Iinf$, we know by \cite{APT}, Corollary 3.3 b) that  $\injd _{\Lambda } T/\tla T= \injd _{\Gamma} \Hom_\Lambda (P,T/\tla T)$. Then,  the corresponding long exact sequence of functors yields isomorphisms of functors $\delta_i: \Ext^i_{\Lambda}(-,\tla T) \rightarrow \Ext^i_{\Lambda}(-,T)$ for $i > \gld \Gamma +1$.

It is enough to show that $\phi_r^{\Lambda}(T)\leq  \phi_r^{\LA}(\tla T )+r$, whenever 
$\phi_r^{\Lambda}(T) > \gld \Gamma+1$. With this purpose we assume that $d=\phi_r^{\Lambda}(T) > \gld \Gamma+1$ and that 
$T=T_1\oplus T_2$ is a $d$-injective-division in $\modu\Lambda$. We start by proving that $\tla T = \tla T_1 \oplus \tla T_2$ is a j-injective division in mod$\Lambda/\id$, for some $j$ such that $d+1\leq j+r$. 
Since $T=T_1\oplus T_2$ is a $d$-injective-division of $T$,  then $\Ext^d_{\Lambda}(-,T_1)\not\simeq \Ext^d_{\Lambda}(-,T_2)$ and $ \Ext^{d+1}_{\Lambda}(-,T_1)\simeq \Ext^{d+1}_{\Lambda}(-,T_2).$ Therefore, since $T_1$ and $T_2$ are also in $\T$ we have that $\Ext^d_{\Lambda}(-,\tla T_1)\not\simeq \Ext^d_{\Lambda}(-,\tla T_2)$  and $\Ext^{d+1}_{\Lambda}(-,\tla T_1)\simeq \Ext^{d+1}_{\Lambda}(-,\tla T_2)$ in $\modu \Lambda$, by the isomorphisms above.

Now, since $\id$ is a strong idempotent ideal we deduce from the last isomorphism that  
$\Ext^{d+1}_{\LA}(-,\tla T_1)\simeq \Ext^{d+1}_{\LA}(-,\tla T_2)$  in mod$\Lambda /\id$.  

On the other hand,   $\pd(\LA)_{\Lambda}\leq  \gld \Gamma +1$ as we observed just before the statement of the proposition. Since we assumed that $\gld(\Gamma)+1 < d$ we obtain that  $\pd(\LA)_{\Lambda} + 1=r+1 \leq d$.                       
 We conclude then, from Lemma \ref{Lema2Jue3} that $\Ext^{d-r}_{\LA}(-,\tla T_1)\not\simeq \Ext^{d-r}_{\LA}(-,\tla T_2)$ in mod$\Lambda/\id$. 
This fact and the isomorphism $\Ext^{d+1}_{\LA}(-,\tla T_1) \simeq \Ext^{d+1}_{\LA}(-,\tla T_2)$ obtained above imply that $\tla T = \tla T_1 \oplus \tla T_2$ is a j-injective division in mod$\Lambda/\id$, for some $j$ such that $d-r\leq j\leq d$. 
Thus $d \leq j+r \leq \phinjd \LA+r$. This proves  that $\phi_r(T)\leq  \phinjd \LA + r$, provided 
$\gld \Gamma + 1 > d  = \phi_r(T)$, and ends the proof of the proposition. \end{proof}

\begin{pro}
Let $\id$ be a strong idempotent ideal. Then $$\phinjd (\Lambda)\leq \pd_{\Lambda}(\LA) +{\rm max}\{\ \gld(\Gamma)+1, \ \pd(\LA)_{\Lambda}+\phinjd(\LA)\ \}.$$
\end{pro}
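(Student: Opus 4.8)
The plan is to chain together the two results immediately preceding this proposition, namely Lemma~\ref{otro}(a) and Proposition~\ref{T}. First I would apply Lemma~\ref{otro}(a), which gives
\[
\phinjd(\Lambda)\leq \pd_{\Lambda}(\LA) + \phinjd(\T).
\]
Then I would substitute the bound for $\phinjd(\T)$ supplied by Proposition~\ref{T}, namely $\phinjd(\T)\leq \maxi\{\gld(\Gamma)+1,\ \phinjd(\LA)+\pd(\LA)_{\Lambda}\}$, to obtain exactly the claimed inequality
\[
\phinjd(\Lambda)\leq \pd_{\Lambda}(\LA) + \maxi\{\gld(\Gamma)+1,\ \pd(\LA)_{\Lambda}+\phinjd(\LA)\}.
\]
Since $\id$ is assumed to be a strong idempotent ideal, Proposition~\ref{T} applies verbatim, and Lemma~\ref{otro}(a) holds for any idempotent ideal, so both ingredients are available with no extra hypotheses.

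One point I would be careful about is the case $\pd_{\Lambda}(\LA)=\infty$: if this happens the right-hand side of both displayed inequalities is $\infty$ and there is nothing to prove, so one should dispose of this case first, exactly as in the proof of Lemma~\ref{otro}. Otherwise $\pd_{\Lambda}(\LA)=t<\infty$, and Lemma~\ref{otro}(a) gives the first step with a finite additive constant, after which Proposition~\ref{T} finishes the argument. I would also remark that, as noted in the excerpt before Proposition~\ref{T}, strong idempotency forces $\pd(\LA)_{\Lambda}\leq \gld\Gamma+1$; this is not strictly needed for the statement as written but explains why the two entries of the maximum are comparable in magnitude.

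There is essentially no obstacle here: this proposition is a formal corollary obtained by composing the two preceding estimates, and the only thing to check is that the hypotheses of those two results are subsumed by the hypothesis "$\id$ is a strong idempotent ideal" of the present proposition, which they are. Accordingly the proof is a two-line deduction, and the bulk of the work has already been done in Lemma~\ref{otro} and Proposition~\ref{T}.

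\begin{proof}
If $\pd_{\Lambda}(\LA)=\infty$ there is nothing to prove, so assume $\pd_{\Lambda}(\LA)<\infty$. By Lemma~\ref{otro}(a) we have $\phinjd(\Lambda)\leq \pd_{\Lambda}(\LA)+\phinjd(\T)$. Since $\id$ is a strong idempotent ideal, Proposition~\ref{T} gives $\phinjd(\T)\leq \maxi\{\gld(\Gamma)+1,\ \phinjd(\LA)+\pd(\LA)_{\Lambda}\}$. Combining these two inequalities yields
\[
\phinjd(\Lambda)\leq \pd_{\Lambda}(\LA)+\maxi\{\gld(\Gamma)+1,\ \pd(\LA)_{\Lambda}+\phinjd(\LA)\},
\]
as claimed.
\end{proof}
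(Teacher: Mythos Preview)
Your proposal is correct and matches the paper's own proof, which simply says that the result follows from Lemma~\ref{otro} and Proposition~\ref{T}. Your handling of the trivial case $\pd_{\Lambda}(\LA)=\infty$ and the remark about $\pd(\LA)_{\Lambda}\leq \gld\Gamma+1$ are harmless additions that do not change the approach.
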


\begin{proof}
The result follows from  Lemma 4.8 and Proposition
 \ref{T}.
\end{proof}

\begin{cor}
Let $\id$ be a strong idempotent ideal. Assume $\gld(\Gamma)< \infty$, then $\phinjd(\LA)$ is finite if and only if 
$\phinjd (\Lambda)$ is finite.
\end{cor}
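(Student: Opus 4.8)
The plan is to deduce the Corollary directly from the inequality in the Proposition that immediately precedes it, together with the symmetric companion inequality that bounds $\phinjd(\LA)$ in terms of $\phinjd(\Lambda)$, which is exactly Theorem \ref{desigualdades} applied to $\phi_r$. Concretely, the forward direction is the easy one: assume $\gld(\Gamma)<\infty$ and $\phinjd(\LA)<\infty$. Since $\id$ is a strong idempotent ideal with $\gld\Gamma<\infty$, the observation made just before Proposition \ref{T} gives $\pd(\LA)_\Lambda\leq\gld\Gamma+1<\infty$ and likewise $\pd{}_\Lambda(\LA)\leq\gld\Gamma+1<\infty$. Plugging these finite quantities into the bound $\phinjd(\Lambda)\leq \pd_{\Lambda}(\LA)+\max\{\gld(\Gamma)+1,\ \pd(\LA)_\Lambda+\phinjd(\LA)\}$ of the preceding proposition shows the right-hand side is a finite number, hence $\phinjd(\Lambda)<\infty$.

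For the converse, assume $\gld(\Gamma)<\infty$ and $\phinjd(\Lambda)<\infty$. Again $\pd{}_\Lambda(\LA)\leq\gld\Gamma+1<\infty$, so the hypothesis of Theorem \ref{desigualdades} is met; applying the $\phi_r$-version of that theorem (valid by the remark that $\phi_l$ may be replaced by $\phi_r$, since $\phi_r(X)=\phi_l(DX)$ and $D$ exchanges a strong idempotent ideal of $\Lambda$ with one of $\Lambda^{op}$) yields $\phi_r^{\LA}(X)\leq\phi_r^{\Lambda}(X)$ for all $X\in\modu\LA$, and therefore $\phinjd(\LA)=\phinjd(\modu\LA)\leq\phinjd(\Lambda)<\infty$. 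This closes the equivalence.

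The only point requiring a little care is making sure the right instance of Theorem \ref{desigualdades} is available for the converse: the theorem as stated uses $\phi_l$ and requires $\pd({}_\Lambda\LA)<\infty$, whereas here we want $\phi_r$. I would handle this by invoking the remark following Theorem \ref{desigualdades} that $\phi_l$ can be replaced by $\phi_r$, noting that the finiteness of $\pd({}_\Lambda\LA)$ needed for that $\phi_r$-statement again follows from $\gld\Gamma<\infty$ via the symmetric strong-idempotent observation. There is essentially no computation; the content is entirely in assembling the two one-sided inequalities, so the ``main obstacle'' is merely bookkeeping of left/right dualities and of which finiteness is needed where — nothing substantive.

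\begin{proof}
Suppose first that $\phinjd(\LA)<\infty$. Since $\id$ is a strong idempotent ideal with $\gld(\Gamma)<\infty$, we have $\pd(\LA)_{\Lambda}\leq\gld(\Gamma)+1<\infty$ and $\pd{}_{\Lambda}(\LA)\leq\gld(\Gamma)+1<\infty$, as observed before Proposition \ref{T}. Then the right-hand side of the inequality of the preceding proposition, $\pd_{\Lambda}(\LA)+\max\{\gld(\Gamma)+1,\ \pd(\LA)_{\Lambda}+\phinjd(\LA)\}$, is finite, and hence $\phinjd(\Lambda)<\infty$.

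Conversely, suppose $\phinjd(\Lambda)<\infty$. As above, $\pd({}_{\Lambda}\LA)\leq\gld(\Gamma)+1<\infty$, so Theorem \ref{desigualdades}, together with the remark that $\phi_l$ may be replaced by $\phi_r$, gives $\phi_r^{\LA}(X)\leq\phi_r^{\Lambda}(X)$ for all $X\in\modu\LA$. Taking the supremum over $X\in\modu\LA$ yields $\phinjd(\LA)\leq\phinjd(\Lambda)<\infty$, as desired.
\end{proof}
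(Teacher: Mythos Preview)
Your argument is correct and matches the paper's intended (implicit) proof: the forward implication from the preceding Proposition, and the converse from the $\phi_r$-version of Theorem~\ref{desigualdades}. One small quibble: under duality the hypothesis actually required for the $\phi_r$-version is $\pd(\LA)_\Lambda<\infty$ rather than $\pd{}_\Lambda(\LA)<\infty$, but since you already observed that both are bounded by $\gld\Gamma+1$, this is harmless.
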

\IP{Since convex subalgebras of $\Lambda$ \MIP{are obtained as} factors of $\Lambda $ by a strong idempotent ideal, the previous results apply to them. In particular, we obtain the following corollary.
\begin{cor}
Let $\Delta$ be a full convex subalgebra of $\Lambda$, and let $\id= \tau _P(\Lambda)$ be the idempotent ideal such that $\Delta = \LA$. If $\Gamma$ has finite global dimension, then  $\phinjd (\Delta)$ is finite if and only if 
$\phinjd (\Lambda)$ is finite.

\end{cor}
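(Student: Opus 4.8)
The plan is to deduce this corollary directly from the preceding Corollary~4.11 (the one asserting that for a strong idempotent ideal $\id$ with $\gld\Gamma<\infty$, the finiteness of $\phinjd(\LA)$ and $\phinjd(\Lambda)$ are equivalent). The only gap to bridge is that a full convex subalgebra $\Delta$ of $\Lambda$ is presented as $\Delta = \LA$ for a \emph{specific} idempotent ideal $\id = \tau_P(\Lambda)$, and one must check that this $\id$ is indeed strong idempotent so that Corollary~4.11 applies. This is precisely the fact already invoked in the excerpt right before Corollary~4.7: by \cite{M}, Ch.~II, Lemma~3.7, the idempotent ideal $\id$ with $\LA \cong \Delta$ (namely the trace of $P = \bigoplus_{i\notin Q_0'} P_i$) is a strong idempotent ideal.

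So the proof I would write is essentially one line: since $\Delta$ is a full convex subalgebra of $\Lambda$, the idempotent ideal $\id = \tau_P(\Lambda)$ with $\Delta = \LA$ is a strong idempotent ideal by \cite{M}, Ch.~II, Lemma~3.7; the hypothesis $\gld\Gamma<\infty$ then lets us apply Corollary~4.11 to $\id$, yielding that $\phinjd(\Delta) = \phinjd(\LA)$ is finite if and only if $\phinjd(\Lambda)$ is finite. First I would recall the convex-subalgebra setup (writing $\Lambda = kQ/I$ with $Q'$ a full convex subquiver and $\Delta = kQ'/(kQ'\cap I)$), identify $P$ and $\id$, cite the strong-idempotency, and then invoke Corollary~4.11.

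There is no real obstacle here: the corollary is a straightforward specialization, and all the work was done in Proposition~4.9, Lemma~4.8, and Corollary~4.11. The only point requiring a word of care is making sure the ideal exhibiting $\Delta$ as a quotient is the trace of a projective module (so that $\Gamma = \End_\Lambda(P)^{op}$ is well defined and the hypothesis $\gld\Gamma<\infty$ is meaningful) and that it is strong idempotent — both of which are standard facts for convex subcategories of quiver algebras, already cited earlier in the paper.

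\begin{proof}
Write $\Lambda = kQ/I$ with $Q$ a finite quiver and $I$ an admissible ideal, and let $Q'$ be the full convex subquiver of $Q$ with $\Delta = kQ'/(kQ'\cap I)$. Let $P = \bigoplus_{i\notin Q_0'} P_i$ and $\id = \tau_P(\Lambda)$, so that $\Delta = \LA$ and $\Gamma = \End_\Lambda(P)^{op}$. By \cite{M}, Ch.~II, Lemma~3.7, the ideal $\id$ is a strong idempotent ideal. Since $\gld(\Gamma)<\infty$ by hypothesis, Corollary~4.11 applies to $\id$ and gives that $\phinjd(\LA)$ is finite if and only if $\phinjd(\Lambda)$ is finite. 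As $\Delta = \LA$, this is the assertion.
\end{proof}
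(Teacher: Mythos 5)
Your proof is correct and follows exactly the route the paper intends: recognize that the ideal presenting $\Delta$ as $\LA$ is a strong idempotent ideal (citing \cite{M}, Ch.~II, Lemma~3.7, the same reference the paper uses just before Corollary~4.4), and then specialize the immediately preceding corollary. The paper's own justification is precisely the one-line remark that convex subalgebras arise as factors by strong idempotent ideals, so the previous results apply; your write-up matches this in substance, just spelled out a bit more fully.
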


\begin{ex}
Let $\Lambda =
\left(\begin{matrix}A&0\\M&B\end{matrix}\right)$, where $A$ and $B$ are artin algebras, $\gld(B)< \infty$  and $M$ is a $B$-$A$-bimodule. Then $\phinjd (\Lambda)$ is finite if and only if $\phinjd (A)$ is finite.
\end{ex}

In particular we obtain that a one point co-extension of $A$ has finite $\phinjd$ if and only if $A$ does.

\vskip .2in

Next we illustrate the previous Proposition with the following example.

\begin{ex}
Let $\Lambda$ be the algebra given by the quiver $Q$ 
\[\xymatrix{ & 1 \ar@<2pt>[r]_{\alpha}\ar@<2pt>[d]_{\delta}\ar@<2pt>[dr]^{\mu}& 2\ar@<-6pt>[l]_{\beta} \\ 
3 \ar@<2pt>[ur]^{\gamma}\ar[r]_{\theta}&4 & 5 \ar[l]^{\epsilon}  }\]
with relations $\alpha \beta = \beta \alpha =0, \mu \gamma =0, \delta \gamma =0, \epsilon \mu = 0$.  Let $P = P_3\oplus P_4\oplus P_5$, and let $\id = \tau_P(\Lambda)$. Then $\id \simeq P_3\oplus P_4^3\oplus P_5\oplus S_5^2 $, so
$\pd \id =1$. Moreover, since there is an exact sequence $0\rightarrow P_4 \rightarrow  P_5 \rightarrow  S_5\rightarrow  0$ we obtain that $S_5 \in \Pinf$, so $\id \in \Pinf$ and is thus a strong idempotent. Then the quiver of $\LA$ is 
 \[\xymatrix{  1 \ar@<2pt>[r]_{\alpha}& 2\ar@<-6pt>[l]_{\beta}   }\]
with radical square zero, so $\LA$ is selfinjective and therefore $\phinjd \LA = 0$, by \cite{HL}. On the other hand $\Gamma $ is the hereditary algebra with quiver   \[\xymatrix{ 3 \ar[r]_{\theta}&4 & 5 \ar[l]^{\epsilon}  }\] and we get

 $$\phinjd (\Lambda)\leq 2 +{\rm max}\{\ 1+1, \ 2+1\ \}=5.$$

\end{ex}

}

We now turn our attention to the case when $\gld {(\LA)}$ is finite.

\begin{pro}
Let $\id$ be a strong idempotent ideal and assume that $\gld(\LA)$ is finite. Then 
\begin{itemize}

\item[(a)]$\phi_r(T)\leq \rm{max}\{\ \gld(\LA)+  \pd(\LA)_{\Lambda} +1,\ \phinjd(T/\tla T)\ \}$ for any $T \in \modu\Lambda$.

\item[(b)] $\phinjd (\T)\leq $ max$\{\ \gld(\LA)+  \pd(\LA)_{\Lambda} +1,\ \phinjd(\Gamma)\ \}$.

\end{itemize}

\end{pro}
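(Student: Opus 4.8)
The plan is to mimic the structure of the proof of Proposition 4.9, replacing the split afforded by $\gld\Gamma$ (which controlled the injective dimension of $T/\tla T$) by the split afforded by $\gld(\LA)$ (which now controls $\tla T$). First I would fix $T$ in $\modu\Lambda$, set $r=\pd(\LA)_\Lambda$ and $g=\gld(\LA)$, and consider the glueing sequence $0\to\tla T\to T\to T/\tla T\to 0$. Since $\tla T$ is a $\LA$-module, every higher syzygy over $\LA$ vanishes in degree $>g$, and by Lemma \ref{Lema2Jue3} this forces $\Ext^i_\Lambda(-,\tla T)=0$ for $i>g+r$. Applying $\Hom_\Lambda(-,?)$ to the three terms of the glueing sequence and using the long exact sequence, I get functorial isomorphisms $\Ext^i_\Lambda(-,T)\simeq\Ext^i_\Lambda(-,T/\tla T)$ for $i>g+r+1$; that is $\Ext^{i}_{\Lambda}(-,T)$ depends only on $T/\tla T$ above that degree.

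The next step is the key dichotomy. Suppose $d=\phi_r^\Lambda(T)>g+r+1$ and let $T=T_1\oplus T_2$ be a $d$-injective division of $T$ in $\modu\Lambda$. By the isomorphisms just obtained (applied to $T_1$, $T_2$, which also lie in $\T$ when $T$ does, but for part (a) I only need $T$ itself; for part (a) the argument runs with any $T$ since every summand of $T$ has $\tla(T_j)=\tla T\cap$-summand again a $\LA$-module), the conditions $\Ext^d_\Lambda(-,T_1)\not\simeq\Ext^d_\Lambda(-,T_2)$ and $\Ext^{d+1}_\Lambda(-,T_1)\simeq\Ext^{d+1}_\Lambda(-,T_2)$ transfer verbatim to $T_1/\tla T_1$ and $T_2/\tla T_2$, because $d-1\geq g+r+1$. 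Hence $T/\tla T=T_1/\tla T_1\oplus T_2/\tla T_2$ is a $d$-injective division of $T/\tla T$, which gives $d\leq\phinjd(T/\tla T)$. Combining with the trivial bound in the remaining case $d\leq g+r+1=\gld(\LA)+\pd(\LA)_\Lambda+1$ yields (a). For (b), I take $T\in\T$, so $T/\tla T\in\Iinf$ by Proposition 3.3(a), and then $\phinjd(T/\tla T)\leq\phinjd(\Iinf)=\phinjd(\Gamma)$ by Proposition \ref{igualdad} (its injective dual): indeed $\Hom_\Lambda(P,-)$ carries $\Iinf$ into $\modu\Gamma$ preserving $\phi_r$, so $\phi_r^\Lambda(T/\tla T)=\phi_r^\Gamma(\Hom_\Lambda(P,T/\tla T))\leq\phd\Gamma$ — and for $\Gamma$-modules $\phi_r$ is bounded by $\gld\Gamma$, hence $\phinjd(\Iinf)\le\phinjd(\Gamma)$ follows directly. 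Substituting into (a) gives (b).

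The main obstacle I anticipate is the bookkeeping in the transfer of the division conditions: I must be careful that the degree shift $r$ coming from Lemma \ref{Lema2Jue3} and the degree shift $1$ coming from the glueing long exact sequence are both absorbed by the hypothesis $d>g+r+1$, so that degrees $d-1$ and $d$ (not just $d+1$) still lie in the range where the comparison isomorphisms $\Ext^i_\Lambda(-,T_j)\simeq\Ext^i_\Lambda(-,T_j/\tla T_j)$ hold. A subtle point is that Lemma \ref{Lema2Jue3} compares $\LA$-Ext with $\Lambda$-Ext for $\LA$-modules in the direction "$\LA$-iso $\Rightarrow$ $\Lambda$-iso", so to run the argument in degree $d$ (where I need a non-isomorphism of $\Lambda$-functors to descend to a non-isomorphism over $\LA$) I should instead argue contrapositively, or invoke that $\tla T_j$ is a $\LA$-module together with the strong-idempotent hypothesis to move freely between $\Ext_\Lambda$ and $\Ext_{\LA}$ in the relevant degrees. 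Once that is set up correctly, the rest is the same max-of-two-cases packaging used in Proposition 4.9.
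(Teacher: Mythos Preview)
Your plan is correct and follows the paper's argument exactly: apply Lemma~\ref{Lema2Jue3} (with $X_2=0$) to get $\Ext^i_\Lambda(-,\tla T)=0$ for $i\geq g+r+1$, so the long exact sequence of the glueing sequence yields $\Ext^i_\Lambda(-,T)\simeq\Ext^i_\Lambda(-,T/\tla T)$ already for $i\geq g+r+1$ (not merely $i>g+r+1$), and hence any $d$-injective division of $T$ with $d\geq g+r+1$ transfers verbatim to a $d$-injective division of $T/\tla T$ in $\modu\Lambda$; part~(b) then follows from Proposition~3.3(a) and Proposition~4.6(b). Your anticipated obstacles are non-issues: you never need degree $d-1$, and after the vanishing step you never work with $\Ext_{\LA}$ again --- the transfer of the division happens entirely inside $\modu\Lambda$, so no contrapositive use of Lemma~\ref{Lema2Jue3} is required.
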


\begin{proof}

Let $s=\gld(\LA), \, r=  \pd(\LA)_{\Lambda}$ and let $X$ be a $\LA$-module.

We claim that $\Ext^i_{\Lambda}(-,X)=0$ for all $i \geq s+r+1$. In fact, since $\gld\LA=s$ we know that $\Ext^j_{\LA}(-,X)=0$ for all $j \geq s+1$. Then $\Ext^{j+r}_{\Lambda}(-,X)=0$ by Lemma 4.2, for $j \geq s+1$. So the claim holds.

Let now $T$ in $\modu \Lambda $ and consider the sequence  $0 \rightarrow \tla T \rightarrow T \rightarrow T/\tla T \rightarrow  0$. The corresponding long exact sequence of functors yields an isomorphism of functors $\delta_i: \Ext^i_{\Lambda}(-,T) \rightarrow \Ext^i_{\Lambda}(-,T/\tla T)$ for each $i \geq s+r+1$.

We prove next that a $d$-division of $T$ in $\modu \Lambda$ yields a $d$-division $T/\tla T$ in $\modu \Lambda$. 

In fact, let  $d=\phinjd (T)$ and let $T=T_1 \oplus T_2$ be a $d$-division of $T$. This is, $\Ext^d_{\Lambda}(-,T_1) \not \simeq  \Ext^d_{\Lambda}(-,T_2)$ and $\Ext^{d+1}_{\Lambda}(-,T_1)  \simeq  \Ext^{d+1}_{\Lambda}(-,T_2)$.
Assume now $d \geq s+r+1$. The functorial isomorphisms $\Ext^{i}_{\Lambda}(-,T_k) \simeq  \Ext^i_{\Lambda}(-,T_k/\tla T_k)$, $k=1,2$ and $i \geq s+r+1$ induced by $\delta_i$  show that $T/\tla T= T_1/\tla T_1 \oplus T_2/\tla T_2$ is a $d$-division of $T/\tla T$ in $\modu \Lambda$. Therefore, $\phi^{\Lambda}_l(T/\tla T) \geq d$. This ends the proof of $(a)$.

Assume finally that $ T \in \T$. Therefore $T/\tla T \in \I_{\infty}$. We know by  Proposition 4.6 (b) that $\phi_r ^\Gamma \Hom_\Lambda(P,T/\tla T) \geq d$ . This proves that $\phinjd \T \leq $ max $\{ s+r+1, \phinjd \Gamma \}$.
\end{proof}

\begin{pro}
Let $\id$ be a strong idempotent ideal. Then
$\phinjd (\Lambda)\leq  \pd _{\Lambda}\LA$  + $\max\{\ \gld(\LA)+  \pd(\LA)_{\Lambda} +1,\ \phinjd(\Gamma)\ \}$
\end{pro}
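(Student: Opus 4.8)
The plan is to combine the two results that have just been established: Lemma \ref{otro}(a), which gives $\phinjd(\Lambda)\leq \pd_{\Lambda}(\LA)+\phinjd(\T)$ for any idempotent ideal, and the immediately preceding proposition, whose part (b) asserts that, when $\id$ is a strong idempotent ideal and $\gld(\LA)$ is finite, $\phinjd(\T)\leq \max\{\gld(\LA)+\pd(\LA)_{\Lambda}+1,\ \phinjd(\Gamma)\}$. Substituting the second bound into the first yields exactly the claimed inequality.

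First I would dispose of the trivial case: if $\pd_{\Lambda}(\LA)=\infty$ there is nothing to prove, since the right-hand side is then infinite (and $\phinjd(\Lambda)$ is a supremum of nonnegative integers, hence $\leq\infty$). Likewise, if $\gld(\LA)=\infty$ the right-hand side is again infinite and the statement is vacuous. So I may assume both $\pd_{\Lambda}(\LA)$ and $\gld(\LA)$ are finite, which is precisely the hypothesis under which part (b) of the previous proposition applies. Note also that the finiteness of $\gld(\LA)$ together with $\id$ strong idempotent forces $\pd_{\Lambda}(\LA)<\infty$ as well, so really only one finiteness assumption is substantive; but it is cleanest to handle the $\pd_{\Lambda}(\LA)=\infty$ case by the one-line remark above and then invoke (b).

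The argument is then a direct chain of inequalities:
\[
\phinjd(\Lambda)\ \leq\ \pd_{\Lambda}(\LA)+\phinjd(\T)\ \leq\ \pd_{\Lambda}(\LA)+\max\{\ \gld(\LA)+\pd(\LA)_{\Lambda}+1,\ \phinjd(\Gamma)\ \},
\]
where the first inequality is Lemma \ref{otro}(a) and the second is part (b) of the preceding proposition. This completes the proof.

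There is essentially no obstacle here, since both ingredients are already available; the only point requiring a moment's care is making sure the hypotheses of part (b) of the previous proposition (namely $\gld(\LA)$ finite and $\id$ strong idempotent) are met, and that the $\pd_{\Lambda}(\LA)=\infty$ and $\gld(\LA)=\infty$ cases are correctly (trivially) absorbed into the stated bound. So I would write the proof as: ``If $\pd_{\Lambda}(\LA)=\infty$ or $\gld(\LA)=\infty$ the inequality is trivial. Otherwise the result follows by combining Lemma \ref{otro}(a) with part (b) of the previous proposition.''
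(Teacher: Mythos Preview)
Your proof is correct and follows exactly the paper's approach: the paper's proof reads ``If $\gld\LA=\infty$ there is nothing to prove. If $\gld\LA$ is finite, the result follows from the previous proposition and Lemma \ref{otro}.'' One small caveat: your parenthetical claim that finiteness of $\gld(\LA)$ together with $\id$ strong idempotent forces $\pd_{\Lambda}(\LA)<\infty$ is not supported by the paper (what is shown is $\pd_{\Lambda}(\LA)\leq \gld\Gamma+1$), but since you do not actually use this and instead handle the $\pd_{\Lambda}(\LA)=\infty$ case directly, it does not affect the argument.
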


\begin{proof} If $\gld\LA=\infty$ there is nothing to prove. If $\gld\LA$ is finite, the result follows from the previous proposition and Lemma \ref{otro} \end{proof}


\IP {Next we obtain  another bound for $\phd (\Lambda)$ with different methods.
}

\begin{lem}
Let $\id$ be a strong idempotent ideal. Let 
$0 \rightarrow X \rightarrow Y \rightarrow Z \rightarrow 0$ be an exact sequence in $\modu \Lambda$ with $X \in \Pinf$ and $Z \in \modu \LA$ such that $\pd _{\LA}Z$ is finite. Then 
\begin{itemize}
\item[(a)] $\Omega^\IP{n}(Y) \in \Pinf $ for $\IP{n} \geq \pd _{\LA}Z$. 

\item[(b)] $\phi_l^\Lambda(Y) \leq \pd _{\LA}Z +  \phd \Gamma $. 
\end{itemize}
\end{lem}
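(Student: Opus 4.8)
The plan is to exploit the exact sequence $0 \to X \to Y \to Z \to 0$ together with the fact that $X \in \Pinf$ and $\pd_{\LA} Z = n < \infty$, to compute enough syzygies of $Y$ until one lands in $\Pinf$, and then to invoke Proposition~\ref{igualdad}(b) to read off the bound for $\phi_l$. First I would analyze the syzygies: since $X \in \Pinf$, every syzygy $\Omega^i(X)$ remains in $\Pinf$ (a projective resolution in $\add P$ of $X$ truncates to one of each syzygy), while $Z$, being a $\LA$-module of finite $\LA$-projective dimension $n$, has $\Omega_{\LA}^n(Z)$ projective over $\LA$. The key point is that because $\id$ is strong idempotent, $\modu\LA$ is well-behaved inside $\modu\Lambda$ and one can track $\Lambda$-syzygies of $Z$; I would use the horseshoe-type argument on the short exact sequence to show $\Omega^i(Y)$ is built from $\Omega^i(X) \in \Pinf$ and $\Omega^i(Z)$, and that for $i \ge n$ the contribution of $Z$ becomes projective (or at least lies in $\add P$), forcing $\Omega^n(Y) \in \Pinf$. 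This gives part (a).

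For part (b), once $\Omega^n(Y) \in \Pinf$, I would apply the standard inequality $\phi_l^\Lambda(Y) \le n + \phi_l^\Lambda(\Omega^n(Y))$ (the dual of Lemma~1.3 in \cite{HLM1}, already used in Lemma~\ref{otro}), and then bound $\phi_l^\Lambda(\Omega^n(Y))$ by $\phd\Gamma$ using Proposition~\ref{igualdad}(b): for a module $W \in \Pinf$ one has $\phi_l^\Lambda(W) = \phi_l^\Gamma(\Hom_\Lambda(P,W)) \le \phd\Gamma$. Combining, $\phi_l^\Lambda(Y) \le n + \phd\Gamma = \pd_{\LA} Z + \phd\Gamma$, which is (b).

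The main obstacle I anticipate is establishing (a) cleanly — specifically, making precise the claim that taking $\Lambda$-syzygies of the $\LA$-module $Z$ interacts correctly with the short exact sequence so that the $Z$-part of $\Omega^n(Y)$ becomes projective. The subtlety is that $\pd_{\LA} Z$ is a $\LA$-projective dimension, not a $\Lambda$-projective dimension; one must use that $\id$ is strong idempotent to relate $\LA$-projective resolutions of $Z$ to its behaviour in $\modu\Lambda$. I would handle this by an induction on $n = \pd_{\LA} Z$: the base case $n=0$ means $Z$ is $\LA$-projective, and one checks directly (using that $\id$ is strong idempotent, hence $\LA \in \Pinf$ or the relevant vanishing from Section~3) that $Y$ itself, or $\Omega(Y)$, lies in $\Pinf$; the inductive step peels off one syzygy, replacing $Z$ by $\Omega_{\LA}(Z)$ with $\pd_{\LA}\Omega_{\LA}(Z) = n-1$, and applies the induction hypothesis to a modified short exact sequence obtained from the snake lemma. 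The rest is routine bookkeeping with projective covers and the closure of $\Pinf$ under syzygies.
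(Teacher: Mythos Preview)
Your proposal is essentially the paper's approach: horseshoe the sequence, use the strong idempotent hypothesis to control the $\Lambda$-syzygies of $Z$ in terms of $\pd_{\LA}Z$, and then for (b) combine Lemma~1.3 of \cite{HLM1} (not its dual---that is the $\phi_r$ version) with Proposition~\ref{igualdad}(b). The paper avoids your induction by invoking \cite{APT}, Theorem~1.6(iii) directly: if $\cdots\to Q_1\to Q_0\to Z\to 0$ is a minimal $\Lambda$-resolution, then $\cdots\to Q_1/\id Q_1\to Q_0/\id Q_0\to Z\to 0$ is a minimal $\LA$-resolution, so $Q_n/\id Q_n=0$ and hence $Q_n\in\add P$ for $n>\pd_{\LA}Z$; this is precisely the ``relating $\LA$-resolutions to behaviour in $\modu\Lambda$'' you anticipated, and it makes the induction unnecessary. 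One small correction: your aside ``hence $\LA\in\Pinf$'' is false in general (the projective cover of $\LA$ is not in $\add P$ unless $\id=\Lambda$), so your base case should rely on the vanishing argument rather than that claim.
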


\begin{proof}
(a) Let $ \cdots P_n \rightarrow   \cdots \rightarrow P_2 \rightarrow P_1 \rightarrow P_0 \rightarrow X \rightarrow 0$
and  $ \cdots Q_n \rightarrow   \cdots \rightarrow Q_2 \rightarrow Q_1 \rightarrow Q_0 \rightarrow Z \rightarrow 0$ be minimal projective resolutions in $\modu \Lambda$.

Since $\id$ is a strong idempotent ideal then $ \cdots Q_n/ \id Q_n \rightarrow   \cdots \rightarrow Q_2/ \id Q_2 \rightarrow Q_1/ \id Q_1 \rightarrow Q_0/ \id Q_0 \rightarrow Z \rightarrow 0$ is a minimal projective resolution of $Z$ in $\modu \LA$ (Theorem 1.6 iii) in \cite{APT}). Then $Q_n/ \id Q_n=0$ for $n > s=\pd _{\LA}Z$. So $Q_n=\id Q_n=\tau_P Q_n$ is in $\add P$ for $n >s$ and therefore $\Omega^n(Z) \in \Pinf$  for $n >s$.

On the other hand, we assumed that $X$ is in $\Pinf$, so that $\Omega^n(X) $ is also in $\Pinf$.
Let $n >s$. Since $ \cdots P_r\oplus Q_r \rightarrow  \cdots \rightarrow P_1\oplus Q_1 \rightarrow P_0\oplus Q_0 \rightarrow Y \rightarrow 0$ is a projective resolution of $Y$ and $P_r\oplus Q_r\in \add P$ for $r\geq n$, then $\Omega^n(Y)\in \Pinf$.
This proves (a).

(b) Let $s= \pd _{\LA}Z$. By a) we know that $\Omega^n(Y)$ is in $\Pinf$, for all $n\geq s$. Thus  
$\phi^{\Lambda}_l(Y)\leq \phi^{\Lambda}_l(\Omega^s(Y)) +s = \phi_l^{\Gamma}(\Hom_{\Lambda}(P,\Omega^s(Y))) + s\leq \phd\Gamma +s$, where the first inequality is given by Lemma 1.3 in \cite{HLM1}, and the equality follows from \IP {Proposition \ref{igualdad} (b).}
\end{proof}

\begin{pro}
Let $\id$ be a strong idempotent ideal. Then 
\begin{itemize}
\item[(a)]
$\phd(\tilde{\T})\leq\gld(\LA)+\phd\Gamma.$
\item[(b)]
$\phd (\Lambda)\leq  \pd(\LA)_{\Lambda} + \gld(\LA)+ \phd(\Gamma)$.
\end{itemize}
\end{pro}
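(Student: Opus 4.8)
The plan is to derive both statements as immediate consequences of the previous lemma and of Lemma~\ref{otro}(b), using the restricted torsion pair on $\tilde\T$ established in Section~3; no new homological estimate should be needed. For part~(a), if $\gld(\LA)=\infty$ there is nothing to prove, so assume $\gld(\LA)<\infty$ and fix an arbitrary $T\in\tilde\T$. By the restricted torsion pair on $\tilde\T$ from Section~3, the glueing sequence $0\to\tau_\id T\to T\to T/\tau_\id T\to 0$ has $\tau_\id T\in\Pinf$ and $T/\tau_\id T\in\modu\LA$, and since $\gld(\LA)$ is finite, $\pd_{\LA}(T/\tau_\id T)\le\gld(\LA)<\infty$. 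I then apply the previous lemma to this short exact sequence, with $X=\tau_\id T$, $Y=T$ and $Z=T/\tau_\id T$; its part~(b) gives $\phi_l^{\Lambda}(T)\le\pd_{\LA}(T/\tau_\id T)+\phd\Gamma\le\gld(\LA)+\phd\Gamma$. Taking the supremum over all $T\in\tilde\T$ yields $\phd(\tilde\T)\le\gld(\LA)+\phd\Gamma$.

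Part~(b) then follows at once: Lemma~\ref{otro}(b) gives $\phd(\Lambda)\le\pd(\LA)_{\Lambda}+\phd(\tilde\T)$, and substituting the bound just obtained produces $\phd(\Lambda)\le\pd(\LA)_{\Lambda}+\gld(\LA)+\phd(\Gamma)$.

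The only step that requires any attention is checking the hypotheses of the previous lemma, and this is exactly where the assumptions enter: strong idempotency of $\id$ is what guarantees (via the restricted torsion pair of Section~3) that the submodule $\tau_\id T$ lies in $\Pinf$, while $\gld(\LA)<\infty$ is what makes the quotient $T/\tau_\id T$, which is a $\LA$-module, have finite projective dimension over $\LA$. With those two facts in place the argument is purely formal, so I anticipate no genuine obstacle — the substance of the result already lives in the previous lemma and in Lemma~\ref{otro}.
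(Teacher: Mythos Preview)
Your proposal is correct and follows essentially the same route as the paper: apply part~(b) of the previous lemma to the glueing sequence $0\to\tau_\id T\to T\to T/\tau_\id T\to 0$ for $T\in\tilde\T$, take the supremum to obtain (a), and then feed (a) into Lemma~\ref{otro}(b) to obtain (b). The only cosmetic difference is that you handle the case $\gld(\LA)=\infty$ in part~(a) while the paper disposes of it in part~(b); either placement works since both inequalities are vacuous in that case.
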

\begin{proof}

(a) Taking supremum on $T\in \tilde{\T}$ and using (b) of the previous lemma applied to the glueing sequence   $\, \, 0 \rightarrow \tau_\id T \rightarrow T \rightarrow  T/\tau_\id T \rightarrow 0$, we get $\phd(\tilde{\T})\leq\phd\Gamma+\gld(\LA).$

(b) If $\gld\LA=\infty$ there is nothing to prove. If $\gld\LA$ is finite, by \IP{Lemma \ref{otro} (b)}, we get
that $\phd (\Lambda)\leq  \pd(\LA)_{\Lambda} + \phd(\tilde{\T}) \leq  \pd(\LA)_{\Lambda} + \gld(\LA)+ \phd(\Gamma)$.

\end{proof}

\footnotesize

\vskip3mm \noindent Mar\'\i a Andrea Gatica:\\
Instituto de Matem\'atica de Bah\'\i a Blanca,\\
Universidad Nacional del Sur,\\
Av. Alem 1253, B8000CPB,\\
Bah\'\i a Blanca, ARGENTINA.

{\tt mariaandrea.gatica@gmail.com}

\vskip3mm \noindent Marcelo Lanzilotta:\\
Instituto de Matem\'atica y Estad\'\i stica Rafael Laguardia (IMERL),\\
Universidad de la Rep\'ublica.\\ 
J. Herrera y Reissig 565 C.P. 11300, Montevideo, URUGUAY.

{\tt marclan@fing.edu.uy}

\vskip3mm \noindent Mar\'\i a In\'es Platzeck:\\
Instituto de Matem\'atica de Bah\'\i a Blanca,\\
Universidad Nacional del Sur,\\
Av. Alem 1253, B8000CPB,\\
Bah\'\i a Blanca, ARGENTINA.

{\tt platzeck@uns.edu.ar}

\end{document}